\documentclass[a4paper, 12pt]{article}
\usepackage[utf8]{inputenc}
 \usepackage[top=2.5cm,bottom=3.0cm,left=2.5cm,right=2.5cm]{geometry}
\usepackage{amssymb,amsmath,amsfonts,amsthm}
\usepackage{graphics}
\usepackage{graphicx}  

\usepackage{color}
\usepackage{soul}

\usepackage{verbatim}

\usepackage{lipsum}

\usepackage{cancel}

 \usepackage[pagewise]{lineno} 

\providecommand{\keywords}[1]
{
  \small	
  \textbf{\textit{Keywords---}} #1
}

\newtheorem{theorem}{Theorem}[section]
\newtheorem{lemma}[theorem]{Lemma}
\newtheorem{proposition}[theorem]{Proposition}
\newtheorem{corollary}[theorem]{Corollary}
\theoremstyle{definition}
\newtheorem{definition}[theorem]{Definition}

\newtheorem{remark}[theorem]{Remark}
 
\newtheorem*{theorem*}{Theorem} 
\newtheorem*{definition*}{Definition}
\newtheorem*{prop*}{Proposition}
\newtheorem*{corollary*}{Corollary}

\newtheorem{question}[theorem]{Question}

\newcommand{\C}{\mathbb{C}}

\newcommand{\N}{\mathbb{N}}

\newcommand{\Z}{\mathbb{Z}}

\newcommand{\SP}{\mathbb{S}}

\newcommand{\e}{\epsilon}

\newcommand{\bm}{\mathbf}

\newcommand{\sub}{\subseteq}

\newcommand{\lav}{\left|}
\newcommand{\rav}{\right|}

\newcommand{\ldav}{\left| \left|}
\newcommand{\rdav}{\right| \right|}

\newcommand{\oo}{\infty}
\newcommand{\lc}{\left(} 
\newcommand{\rc}{\right)}
\newcommand{\lb}{\left[}
\newcommand{\rb}{\right]}
\newcommand{\lfp}{\left\{}
\newcommand{\rfp}{\right\}}
\newcommand{\mc}{\mathcal}
\newcommand{\mf}{\mathfrak}

\usepackage{hyperref}
\hypersetup{
    colorlinks,
    citecolor=black,
    filecolor=black,
    linkcolor=black,
    urlcolor=black
}

\title{On Generalized Hyperbolicity, Stability, and Shadowing for Linear Operators}

\date{ }

\author{
Ali Messaoudi\thanks{\texttt{ali.messaoudi@unesp.br}},
José T. Neto\thanks{\texttt{jose.tofanin@unesp.br}},
Manuel Saavedra\thanks{\texttt{manuel.saavmath@gmail.com}},
Ioannis Tsokanos\thanks{\texttt{ioannis.tsokanos@unesp.br}}\\[1ex]
\small Sao Paulo State University \\
\small Department of Mathematics, Institute of Biosciences, Letters and Exact Sciences\\
\small Rua Cristóvão Colombo, 2265, Jardim Nazareth, São José do Rio Preto,
15054-000, SP, Brazil
}

\begin{document}

\maketitle

\begin{abstract}

This work studies the relations between shadowing, topological
stability, Lipschitz structural stability, and pseudo-hyperbolicity for linear dynamical systems on Banach spaces. 

The main result establishes that pseudo-hyperbolicity implies both topological stability and strong Lipschitz structural stability, whereas strong Lipschitz structural stability implies the shadowing property. In addition, a spectral characterization of pseudo-hyperbolicity is obtained, yielding an equivalence between pseudo-hyperbolicity, topological stability, strong Lipschitz structural stability, and the shadowing property for invertible operators whose eigenspaces associated with unimodular eigenvalues admit closed complements. In particular, these four notions are equivalent on Hilbert spaces. 
Combined with a recent result of Dragičević and Pituk, these results yield, for a large class of Banach spaces, an invertible operator that has the shadowing property but is not generalized hyperbolic, thereby answering an open problem in linear dynamics. 

\end{abstract}

\noindent\textit{2020 Mathematics Subject Classification:}
Primary 47A10; Secondary 37B25, 37B65, 47B37.

\smallskip 
\noindent 

\keywords{Pseudo-hyperbolicity, generalized hyperbolicity, shadowing property, Lipschitz structural stability, spectral characterization}

\section{Introduction}

Hyperbolicity, shadowing, expansivity, and stability are fundamental notions in the theory of discrete dynamical systems; see, for instance \cite{Aoki-Topological_Dynamics, Katok-Hasselblatt-Introduction_Modern_Dynamical_Systems, Pilyugin-Shadowing_in_Dynamical_Systems, Shub-Global_Stability_of_Dynamical_Systems,  Walters_On_the_Pseudo_Orbit_Tracing_Property_and_its_relationship_to_Stability}.

In the setting of linear dynamics, the interplay among these notions has been extensively studied. Hartman \cite{Hartman} proved that every invertible hyperbolic operator on a finite-dimensional Banach space is strongly Lipschitz structurally stable. This result was independently extended to infinite-dimensional Banach spaces by Palis \cite{Palis} and Pugh \cite{Pugh}; see also \cite{Moser-On_A_Theorem_Of_Anosov}. The converse was established in finite dimensions by Robbin \cite{Robbin-Topological_Conjugacy_and_Structural_Stability_for_Discrete_Dynamical_Systems} in 1972. In contrast, Bernardes and Messaoudi \cite[Theorem 9]{Bernardes-Shadowing_and_Structural_Stability} recently showed that the converse fails in infinite-dimensional Banach spaces.

Motivated by these developments, attention has shifted to classes of operators beyond the hyperbolic setting. In particular, the notion now known as generalized hyperbolicity was introduced in \cite{Bernardes-Expansivity_and_Shadowing_in_Linear_Dynamics}, while the present terminology was adopted in \cite{Cirilo_Gollobit_Pujals-Dynamics_of_Generalized_Hyperbolic_Linear_Operators}. 
It is now known that generalized hyperbolicity implies strong Lipschitz structural stability \cite[Theorem~1]{Bernardes_Messaoudi-A_Generalized_Grobman-Hartman Theorem}, topological stability \cite{Lee_Morales-Topological_Stability_L-Shadowing_and_Generalized_Hyperbolicity}, and the shadowing property \cite{Bernardes-Expansivity_and_Shadowing_in_Linear_Dynamics}. Moreover, Lee and Morales proved that every topologically stable operator has the shadowing property \cite{Lee_Morales-Topological_Stability_L-Shadowing_and_Generalized_Hyperbolicity}; see also \cite[Corollary~15]{Bernardes_Caraballo_Darji_Favaro_Peris-Generalized_Hyperbolicity_Stability_and_Expansivity_for_Operators_on_Locally_Convex_Spaces}.

It is well known that an invertible operator $T$ on a complex Banach space is hyperbolic if and only if it is expansive and has the shadowing property \cite{Bernardes-Shadowing_and_Structural_Stability}. Moreover, in finite-dimensional spaces and for normal operators on Hilbert spaces, hyperbolicity, uniform expansivity, and the shadowing property are equivalent (see \cite{Bernardes-Expansivity_and_Shadowing_in_Linear_Dynamics,Eisenberg_Hedlund-Expansive_Automorphisms_of_Banach_Spaces,Ombach-The_Shadowing_Lemma_in_the_Linear_Case}). Corresponding spectral characterizations of hyperbolicity, uniform expansivity, and the shadowing property have also been established in terms of the spectrum \cite{Eisenberg_Hedlund-Expansive_Automorphisms_of_Banach_Spaces}, approximate point spectrum \cite{Hedlund-Expansive_Automorphisms_of_Banach_Spaces_II}, and surjective spectrum \cite{Dragicevic_Pituk-Duality_between_Shadowing_and_Uniform_Expansivity_in_Linear_Dynamics}, respectively.

For particular classes of operators on classical Banach spaces, related equivalences have already been established. Bayart \cite{Bayart-Two_Problems_on_Weighted_Shifts_in_Linear_Dynamics} proved that an invertible bilateral weighted shift with positive weights on $\ell^{p}(\mathbb{Z})$, $1\leq p<\infty$, or on $c_{0}(\mathbb{Z})$ is strongly Lipschitz structurally stable if and only if it has the shadowing property. Maiuriello \cite{Maiuriello-expan} studied strong Lipschitz structural stability for composition operators on $L_{p}$-spaces, $1\leq p<\infty$, showing that, in the dissipative case, the shadowing property implies strong Lipschitz structural stability and establishing sufficient conditions under which the two notions are equivalent. Moreover, D'Aniello, Darji, and Maiuriello
\cite[Corollary~GH]{DAniello_Darji_Maiuriello-Generalized_Hyperbolicity_and_Shadowing_in_Lp_Spaces}
proved that, for composition operators induced by dissipative systems of bounded distortion, generalized hyperbolicity is equivalent to the shadowing property.

In recent years, the shadowing property has also been investigated in connection with chaotic behavior in linear dynamics. For instance, Antunes et al. \cite{Antunes} proved that an operator on a Banach space that simultaneously possesses the shadowing property and chain recurrence is frequently hypercyclic and mixing. Bernardes and Peris \cite{Bernardes_Peris-On_Shadowing_and_Chain_Recurrence_in_Linear_Dynamics} subsequently showed that every operator in this class is densely distributionally chaotic.

\bigskip

Despite these advances, several fundamental questions concerning the precise relations among shadowing, stability, and chaotic phenomena remain unresolved. Two such problems will be addressed below:

\begin{question}\label{Q1}
Does strong Lipschitz structural stability imply the shadowing property? Conversely, does the shadowing property imply strong structural stability?
\end{question}

\begin{question}[\cite{Bernardes_Caraballo_Darji_Favaro_Peris-Generalized_Hyperbolicity_Stability_and_Expansivity_for_Operators_on_Locally_Convex_Spaces}, Problem~F]\label{Q2}
Is every operator with the shadowing property necessarily generalized hyperbolic?
\end{question}

\bigskip

In this work, the notion of pseudo-hyperbolicity, which is weaker than generalized hyperbolicity, is considered. Namely, an invertible operator $T$ on a complex Banach space $\mc{B}$ is said to be \emph{pseudo-hyperbolic} if there exist a bounded linear operator $P:\mc B\to\mc B$ and constants $C>0$ and $\beta\in(0,1)$ such that
\begin{equation}\label{Eq_Pseudo-Hyperbolicic Exponential Bounds}
\Vert T^n Px\Vert\le C\beta^n\Vert x\Vert
\quad\text{and}\quad
\Vert T^{-n}(I-P)x\Vert\le C\beta^n\Vert x\Vert,
\qquad \forall x\in\mc B\text{ and }n\ge0.
\end{equation}
The class of operators satisfying the exponential bounds \eqref{Eq_Pseudo-Hyperbolicic Exponential Bounds} has already appeared in the literature; see, for instance, the work of Pituk \cite{Pituk_Spectral Characterization of Shadowing for Linear Operators on Hilbert Spaces}.

Pseudo-hyperbolicity is shown to imply both strong Lipschitz structural stability and topological stability, while strong Lipschitz structural stability implies the shadowing property; see Theorem~\ref{Theor_Pseudo-Hyperbolicity Lipschitz Stability Shadowing}. Since every generalized hyperbolic operator is pseudo-hyperbolic, the same conclusions hold for generalized hyperbolicity. In particular, the implication from strong Lipschitz structural stability to the shadowing property gives an affirmative answer to the first part of Question~\ref{Q1}. 
A spectral characterization of pseudo-hyperbolicity is established in Proposition~\ref{Prop_Pseudo-Hyperbolicity Spectral Characterization}. Combining this characterization and Theorem~\ref{Theor_Pseudo-Hyperbolicity Lipschitz Stability Shadowing} with the recent spectral characterization of the shadowing property due to Dragičević and Pituk \cite{Dragicevic_Pituk-Duality_between_Shadowing_and_Uniform_Expansivity_in_Linear_Dynamics} yields the equivalence of pseudo-hyperbolicity, strong Lipschitz structural stability, topological stability, and the shadowing property for a broad class of operators, including all invertible operators on Hilbert spaces. 
Finally, for every $1<p<\infty$ with $p\neq2$, an invertible operator on $\ell^p(\mathbb{N})$ is constructed that has the shadowing property but is not generalized hyperbolic, thereby providing a negative answer to Question~\ref{Q2}. 
More generally, the construction applies to every Banach space that is isomorphic to the direct sum of itself with itself and admits a closed non-complemented subspace $M$ such that the quotient space is isomorphic to the original space. By the classical theorem of Lindenstrauss and Tzafriri \cite{Lindenstrauss_Tzafriri-On_the_Complemented_Subspaces_Problem}, the existence of a closed non-complemented subspace is equivalent to the space not being isomorphic to a Hilbert space.

\paragraph{}

The paper is organized as follows. Section
\ref{Sec_Preliminaries}
contains the necessary notions and definitions. The main results are stated in Section
\ref{Sec_Main Results}.
Their proofs are presented in Sections
\ref{Sec_Stability},
\ref{Sec_Pseudo-Hyperbolicity and Spectrum},
and
\ref{Sec_Example Shadowing but not Generalized Hyperbolic}.

\section{Preliminaries}\label{Sec_Preliminaries}

Throughout the paper, $\lc \mc B,\ldav \cdot \rdav_\mc{B}\rc$ denotes a complex Banach space. The Banach space of bounded linear operators on $\mathcal B$, endowed with the operator norm, is denoted by $\mathcal L(\mathcal B)$, while $\mathcal{GL}(\mathcal B)$ denotes its group of invertible elements. 

\paragraph{}
An operator $T\in\mc{GL}\lc\mc B\rc$ is said to be \emph{hyperbolic} if $\mc B$ admits a decomposition as the direct sum of two closed subspaces,
$\mc B=M\oplus N$, such that
\[
T(M)=M \quad\text{and}\quad T^{-1}(N)=N,
\]
and $\sigma\lc T|_M\rc\sub\mathbb D$ and $ \sigma\lc T^{-1}|_N\rc\sub\mathbb D$, where $\mathbb D=\lfp\lambda\in\C:\;|\lambda|<1\rfp $ denotes the open unit disk. The operator $T\in\mc{GL}(\mc B)$ is said to be \emph{expansive} if there exists a constant $C > 1$ such that, for every unit vector
$x\in\mc B$, there exists $n_x\in\Z$ such that $ \|T^{n_x}x\|_{\mc B} \ge C $. 
The operator $T$ is said to be \emph{uniformly expansive} if there exists an integer $n\ge1$ and a constant $C > 1$ such that, for every unit vector $x\in\mc B$, either
$$ 
\|T^n x\|_{\mc B} \ge C
\quad\text{or}\quad
\|T^{-n}x\|_{\mc B} \ge C.
$$
Infinite-dimensional examples of uniformly expansive operators that are not hyperbolic were first constructed by Eisenberg and Hedlund \cite{Eisenberg_Hedlund-Expansive_Automorphisms_of_Banach_Spaces}.

\paragraph{} 
The shadowing property (also known as the \emph{pseudo-orbit tracing property}) is a fundamental notion in dynamical systems; see, for instance, \cite{Pilyugin-Shadowing_in_Dynamical_Systems}. It was introduced independently by Bowen \cite{Bowen-Omega-limit_Sets_for_Axiom-A_Diffeomorphisms} and Sinai \cite{Sinai-Gibbs_Measures_in_Ergodic_Theory}.

\begin{definition}[Shadowing and Positive Shadowing Property]
Let $T\in\mc{GL}\lc\mc B\rc$ be an invertible operator. Given $\delta>0$, a sequence $(x_n)_{n\in\Z}$ is called a \emph{$\delta$-pseudo-orbit} of $T$ if
\[
\|x_{n+1}-Tx_n\|_{\mc B}\le\delta
\qquad\text{for every }n\in\Z.
\]
The operator $T$ is said to have the \emph{shadowing property} if, for every $\varepsilon>0$, there exists $\delta>0$ such that every $\delta$-pseudo-orbit $(x_n)_{n\in\Z}$ is \emph{$\varepsilon$-shadowed} by an orbit of $T$; that is, there exists $x\in\mc B$ such that $ \ldav x_n-T^n x\rdav_{\mc B}\le\varepsilon$ for every $n\in\Z$. 

For a non-invertible operator $T\in\mc L(\mc B)$, the notions of a
\emph{positive $\delta$-pseudo-orbit} and the \emph{positive shadowing property}
are defined analogously by replacing the index set $\Z$ with
$\N\cup\{0\}$. 
\end{definition}

Hyperbolicity always implies the shadowing property \cite[Theorem~1]{Ombach-The_Shadowing_Lemma_in_the_Linear_Case}. Motivated by the search for non-hyperbolic operators with the shadowing property, Bernardes \emph{et al.} \cite{Bernardes-Expansivity_and_Shadowing_in_Linear_Dynamics} introduced the notion of generalized hyperbolicity, thereby providing a broader class of operators with the shadowing property.

\begin{definition}[Generalized Hyperbolic Operators]\label{def-pseudo}
An operator $T\in\mc{GL}(\mc B)$ is said to be \emph{generalized hyperbolic} if $\mc B$ admits a decomposition $ \mc B=M\oplus N$, where $M$ and $N$ are closed subspaces satisfying $ T\lc M\rc \sub M$  and $ T^{-1}(N)\subset N$, 
and the spectra of the restrictions $T|_M$ and $T^{-1}|_N$ satisfy $ \sigma(T|_M) \sub \mathbb D $ and  $ \sigma(T^{-1}|_N) \sub \mathbb D$. 
\end{definition}

\noindent

Every generalized hyperbolic operator
$T\in\mc{GL}(\mc B)$
is pseudo-hyperbolic. Indeed, condition
\eqref{Eq_Pseudo-Hyperbolicic Exponential Bounds}
holds with $P$ being the projection of $\mc B$ onto $M$. Moreover, for this choice of $P$, the spectral inclusions $ \sigma(T|_M) \sub \mathbb D $ and  $ \sigma(T^{-1}|_N) \sub \mathbb D$ are equivalent to
\eqref{Eq_Pseudo-Hyperbolicic Exponential Bounds}.
In \cite[Theorem~A]{Bernardes-Expansivity_and_Shadowing_in_Linear_Dynamics},
Bernardes \emph{et al.} proved that every generalized hyperbolic operator has the shadowing property by exploiting condition
\eqref{Eq_Pseudo-Hyperbolicic Exponential Bounds}. 
Further examples of non-hyperbolic shadowing operators arising in this setting can be found in \cite{Aniello_Darji_Maiuriell-Generalized_Hyperbolicity_and_Shadowing_in_Lp_Spaces}, \cite{Cirilo_Gollobit_Pujals-Dynamics_of_Generalized_Hyperbolic_Linear_Operators}, and \cite[Example~1.9]{Pituk_Spectral Characterization of Shadowing for Linear Operators on Hilbert Spaces}.

\paragraph{}

Hyperbolicity, uniform expansivity, and the shadowing property admit spectral characterizations that play a central role in linear dynamics. 
To state the main results and review the relevant literature, let $ \SP^{1} = \lfp \lambda\in\C: |\lambda|=1 \rfp $ 
denote the unit circle, and define the \emph{spectrum}, \emph{point spectrum}, \emph{surjective spectrum}, \emph{approximate point spectrum}, and \emph{right spectrum} of an operator $T\in\mc L(\mc B)$ by 
\begin{align*}
\sigma(T)
&=
\lfp
\lambda\in\C:\;
T-\lambda I
\text{ is not invertible}
\rfp,\\
\sigma_{pt}(T)
&=
\lfp
\lambda\in\C:\;
T-\lambda I
\text{ is not injective}
\rfp,\\
\sigma_{sur}(T)
&=
\lfp
\lambda\in\C:\;
T-\lambda I
\text{ is not surjective}
\rfp,\\
\sigma_{ap}(T)
&=
\lfp
\lambda\in\C:\;
T-\lambda I
\text{ is not bounded below}
\rfp,\\
\sigma_r(T)
&=
\lfp
\lambda\in\C:\;
T-\lambda I
\text{ is not right invertible}
\rfp. 
\end{align*}
It is known that an operator $T$ is hyperbolic if and only if $ \sigma(T)\cap\SP^1=\emptyset$ (see \cite[Lemma~1]{Eisenberg_Hedlund-Expansive_Automorphisms_of_Banach_Spaces}). 
Likewise, \cite[Theorem 1]{Hedlund-Expansive_Automorphisms_of_Banach_Spaces_II} showed that $T$ is uniformly expansive if and only if $ \sigma_{ap}(T)\cap\SP^1=\emptyset $. 
The following theorem, recently obtained by Pituk and Dragičević \cite{Dragicevic_Pituk-Duality_between_Shadowing_and_Uniform_Expansivity_in_Linear_Dynamics}, characterizes the shadowing property in terms of the surjective spectrum (see also \cite{Pituk_Spectral Characterization of Shadowing for Linear Operators on Hilbert Spaces} for the Hilbert space case).

\begin{theorem}\label{Theor_Equivalence Shadowing and Spectrum}\cite[Theorem 2.2]{Dragicevic_Pituk-Duality_between_Shadowing_and_Uniform_Expansivity_in_Linear_Dynamics} 
Let $\mc{B}$ be a complex Banach space. An operator
$T\in G\mc{L}(\mc{B})$
(respectively, $T\in\mc{L}(\mc{B})$)
has the shadowing property
(respectively, the positive shadowing property)
if and only if $ \sigma_{sur}(T)\cap\SP^1=\emptyset $. 
\end{theorem}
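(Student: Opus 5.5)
Since Theorem~\ref{Theor_Equivalence Shadowing and Spectrum} is quoted from the literature, its proof is not reproduced here. One would prove it by reducing shadowing to the solvability of an inhomogeneous difference equation on bounded sequences, and then using duality to turn that into a statement about the surjective spectrum. We sketch this plan for $T\in\mc{GL}(\mc B)$; the positive case is analogous with index set $\N\cup\{0\}$.

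\emph{Step 1 (reformulation).} By linearity and scaling, $T$ has the shadowing property if and only if there is $K>0$ with the following property: for every bounded sequence $(y_n)_{n\in\Z}$ there is a bounded sequence $(z_n)$ satisfying $z_{n+1}-Tz_n=y_n$ and $\sup_n\|z_n\|\le K\sup_n\|y_n\|$. To see this, set $y_n=x_{n+1}-Tx_n$ and $z_n=x_n-T^nx$. This property says that the operator $\mathcal T:\ell^\infty(\Z,\mc B)\to\ell^\infty(\Z,\mc B)$, $(\mathcal T z)_n=z_{n+1}-Tz_n$, is surjective. The uniform constant $K$ follows from the open mapping theorem. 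Conversely, the standard argument gives existence with a uniform $\delta$ from the bounded-solution property.

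\emph{Step 2 (spectral side, $\Rightarrow$).} Suppose $\mathcal T$ is surjective and $\lambda\in\SP^1$. For $v\in\mc B$, take $y_n=\lambda^{n+1}v$ and a bounded solution $z$. Averaging $w=\text{LIM}_n\,\lambda^{-n}z_n$ with a Banach limit is not directly available in $\mc B$, so instead we use a Cesàro mean $w_N=\frac1N\sum_{n=0}^{N-1}\lambda^{-n}z_n$. It satisfies $\lambda w_N-Tw_N=v+O(1/N)$ with $\|w_N\|\le K\|v\|$. This shows that $T-\lambda I$ has dense range with a uniform bound. Making it exactly surjective needs the dual formulation: $T-\lambda I$ is surjective if and only if $T^*-\lambda I$ is bounded below. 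The estimate above shows $\|(T^*-\lambda)\phi\|\ge K^{-1}\|\phi\|$. Indeed, for $\|v\|\le1$ we have $|\phi(v)|\le |\langle (\lambda-T^*)\phi,w_N\rangle|+O(1/N)\|\phi\|$. Hence $\lambda\notin\sigma_{sur}(T)$.

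\emph{Step 3 ($\Leftarrow$), the main obstacle.} Suppose $\sigma_{sur}(T)\cap\SP^1=\emptyset$. Equivalently, $\sigma_{ap}(T^*)\cap\SP^1=\emptyset$, so by Hedlund's theorem $T^*$ is uniformly expansive. I would then prove surjectivity of $\mathcal T$ via the closed range theorem, showing that its adjoint, restricted to $\ell^1(\Z,\mc B^*)$, is bounded below. That adjoint acts as $(\phi_n)\mapsto(\phi_{n-1}-T^*\phi_n)$, and uniform expansivity of $T^*$ should give a lower bound $\sum\|\phi_{n-1}-T^*\phi_n\|\ge c\sum\|\phi_n\|$. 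This lower bound on $\ell^1$ is delicate: one must also handle weak$^*$ closure issues, since $\ell^\infty(\Z,\mc B)$ is not the dual of $\ell^1(\Z,\mc B^*)$ in general. I expect this step to be the hardest. I would attack it by working with finite windows $[-N,N]$, solving the finite problems with bounds uniform in $N$ (which is where uniform expansivity enters, as a discrete dichotomy-type estimate), and then passing to the limit by a diagonal argument on weakly compact pieces. Failing that, one can construct the solution directly by summation: writing solutions as $\sum_k G(n,k)y_k$ with Green-type operators, built from right inverses of $T-\lambda$ integrated over the circle, i.e.\ via the holomorphic functional calculus around $\SP^1$.
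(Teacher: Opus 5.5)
Your Steps 1 and 2 are sound and essentially match the paper. The equivalence of shadowing with surjectivity of $H_T$ is quoted there too, and your Ces\`aro means $w_N$ are the paper's averaging device. The paper concludes via the second step of the open mapping proof and the rotation $T\mapsto\lambda^{-1}T$; you conclude via the dual bound $\|(T^*-\lambda)\phi\|\ge K^{-1}\|\phi\|$. Both work, modulo the slip $(\lambda I-T)w_N=\lambda v+O(1/N)$.

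The genuine gap is Step 3, the substantive direction, where none of your routes closes.
\begin{itemize}
\item \emph{The $\ell^1(\mathbb{Z},\mathcal{B}^*)$ bound.} The lower bound you aim for does hold. But $\ell^1(\mathbb{Z},\mathcal{B}^*)$ is the dual of $c_0(\mathbb{Z},\mathcal{B})$, so the closed range theorem only gives surjectivity of $(z_n)\mapsto(z_{n+1}-Tz_n)$ on $c_0(\mathbb{Z},\mathcal{B})$. Surjectivity on $\ell^\infty(\mathbb{Z},\mathcal{B})$ needs a lower bound for the adjoint on all of $\ell^\infty(\mathbb{Z},\mathcal{B})^*$, which is much larger.
\item \emph{The finite-window limit.} This does not rescue the argument. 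The admissible initial points $z_0$ for the windows $[-N,N]$ form a decreasing sequence of nonempty bounded closed convex sets. Such sequences are guaranteed a common point only when $\mathcal{B}$ is reflexive, so the diagonal/weak-compactness argument fails for general $\mathcal{B}$.
\item \emph{The Green-function fallback.} This is impossible in general, not just incomplete. It presupposes bounded right inverses of $T-\lambda I$ for $\lambda\in\mathbb{S}^1$, i.e.\ $\sigma_r(T)\cap\mathbb{S}^1=\emptyset$. By Proposition~\ref{Prop_Pseudo-Hyperbolicity Spectral Characterization} that is pseudo-hyperbolicity, and Theorem~\ref{Theor_Shadowing but not Generalized Hyperbolic} gives an invertible $T$ with $\sigma_{sur}(T)\cap\mathbb{S}^1=\emptyset$ but $\sigma_r(T)\cap\mathbb{S}^1\neq\emptyset$. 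Indeed, any summable convolution solution operator $z_n=\sum_k G(n-k)y_k$ would yield the right inverse $\sum_m\lambda^{-m}G(m)$ of $\lambda I-T$.
\end{itemize}

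The missing idea is to stay with the abstract dual and exploit commutativity. Write $H_T=S-\widehat T$, where $S$ is the shift and $\widehat T$ acts coordinatewise; these commute.
\begin{itemize}
\item If $H_T$ were not surjective, then $0\in\sigma_{sur}(H_T)=\sigma_{ap}(S^*-\widehat T^*)$.
\item The Davis--Rosenthal inclusion (Lemma~\ref{Lem_Spectrum of the Difference}) then gives $\sigma_{ap}(S^*)\cap\sigma_{ap}(\widehat T^*)\neq\emptyset$, i.e.\ $\sigma_{sur}(S)\cap\sigma_{sur}(\widehat T)\neq\emptyset$.
\item A coordinatewise open-mapping argument shows $\sigma_{sur}(\widehat T)=\sigma_{sur}(T)$, and $\sigma_{sur}(S)=\mathbb{S}^1$. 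This contradicts $\sigma_{sur}(T)\cap\mathbb{S}^1=\emptyset$.
\end{itemize}
This argument never identifies $\ell^\infty_{\mathbb{L}}(\mathcal{B})^*$ and needs neither compactness nor right inverses. It also treats $\mathbb{L}=\mathbb{Z}$ and $\mathbb{L}=\mathbb{N}$ at once, whereas your appeal to Hedlund's theorem is unavailable in the non-invertible positive case.
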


\noindent
In Section \ref{Sec_Stability}, a shorter and independent proof than the one given in \cite{Dragicevic_Pituk-Duality_between_Shadowing_and_Uniform_Expansivity_in_Linear_Dynamics} is presented.

\paragraph{}

Spectral characterizations are only one aspect of the interplay between these dynamical properties. Equally important are their connections with structural and topological stability. 
The concept of stability was introduced by Andronov and Pontrjagin
\cite{Andronov_Pontrjagin-Systemes_Grossiers} 
and formalizes the idea that the qualitative behavior of a dynamical system should remain unchanged under sufficiently small perturbations. 
The invariance of qualitative behavior is formalized by the notion of topological conjugacy. More precisely, two continuous maps $f,g:\mc B\to\mc B$ are said to be \emph{topologically conjugate} if there exists a homeomorphism $H:\mc B\to\mc B$ such that $ f\circ H=H\circ g $.

The notions of topological stability and strongly Lipschitz structural stability are recalled below. To state the latter, given $\delta>0$, denote by
\[
\mf{Lip}_{\delta}\lc\mc B\rc=
\lfp
\phi:\mc B\to\mc B:
\ldav\phi\rdav_{\oo}\le\delta
\ \text{and}\
\operatorname{Lip}\lc\phi\rc\le\delta
\rfp
\]
the set of $\delta$-Lipschitz maps on $\mc B$, where
\[
\operatorname{Lip}\lc\phi\rc:=
\sup_{\substack{x,y\in\mc B\\ x\neq y}}
\frac{\ldav\phi(x)-\phi(y)\rdav_{\mc B}}
{\ldav x-y\rdav_{\mc B}}
\]
denotes the Lipschitz constant of $\phi$.

\begin{definition}[Topological Stability and strong Lipschitz Structural Stability]
An operator $T \in \mc{GL}\lc \mc{B} \rc$ is said to be \emph{topologically stable} if, for every $\e>0$, there exists $\delta>0$ such that, for every homeomorphism $g:\mc B\to\mc B$ satisfying $ 
\ldav T-g\rdav_{\oo}\le\delta$, there exists a continuous map $H:\mc B\to\mc B$ such that
\[
T\circ H=H\circ g
\qquad\text{and}\qquad
\ldav H-I\rdav_{\oo}\le\e.
\]

An operator $T\in\mc{GL}\lc\mc B\rc$ is said to be \emph{strongly Lipschitz structurally stable} if, for every $\e>0$, there exists $\delta>0$ such that, for every $\phi\in\mf{Lip}_{\delta}\lc\mc B\rc$, the operators $T$ and $T+\phi$ are topologically conjugate by a homeomorphism $H:\mc B\to\mc B$ satisfying $ \ldav H-I\rdav_{\oo}\le\e$.

\end{definition}

\section{Main results}\label{Sec_Main Results}

The first main result establishes the implications between pseudo-hyperbolicity, topological stability, strong Lipschitz structural stability, and the shadowing property. 

\begin{theorem}\label{Theor_Pseudo-Hyperbolicity Lipschitz Stability Shadowing}
Let $\mc{B}$ be a Banach space, and let $T\in\mc{GL}\lc\mc{B}\rc$. The following assertions hold:
\begin{enumerate}
    \item If $T$ is pseudo-hyperbolic, then $T$ is topologically stable and strongly Lipschitz structurally stable.

    \item If $T$ is strongly Lipschitz structurally stable, then $T$ has the shadowing property.
\end{enumerate} 
\end{theorem}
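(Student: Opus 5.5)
For part 1, I would follow the standard Grobman--Hartman/Palis--Pugh scheme, but work with the bounded operator $P$ directly rather than with an invariant splitting. The key object is the Green-type operator on the Banach space $\ell^\infty(\Z,\mc B)$ of bounded sequences. Given a bounded sequence $(y_n)_{n\in\Z}$, I would define
\[
(\mc G y)_n=\sum_{k\ge 0} T^{k}P\,y_{n-k-1}-\sum_{k\ge 1}T^{-k}(I-P)\,y_{n+k-1}.
\]
By \eqref{Eq_Pseudo-Hyperbolicic Exponential Bounds} this is bounded with $\|\mc G\|\le C(1+\beta)/(1-\beta)$, and a telescoping computation gives $(\mc G y)_{n+1}-T(\mc G y)_n=Py_n+(I-P)y_n=y_n$. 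Thus $\mc G$ is a bounded right inverse of the operator $\mc L z=(z_{n+1}-Tz_n)_n$, with no need for $P$ to commute with $T$ or to be a projection.

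For \emph{topological stability}, given a homeomorphism $g$ with $\|T-g\|_\oo\le\delta$, I look for $H=I+h$ with $h$ bounded and continuous and $T\circ H=H\circ g$. This equation reads $h(g(x))-Th(x)=Tx-g(x)$. I would work in the space $C_b(\mc B,\mc B)$ and use the operator $\mc L h=h\circ g-Th$, whose right inverse is $\mc G h=\sum_{k\ge0}T^kP\,h\circ g^{-k-1}-\sum_{k\ge1}T^{-k}(I-P)\,h\circ g^{k-1}$. This series converges uniformly, preserves continuity, and has norm at most $K:=C(1+\beta)/(1-\beta)$. Setting $h=\mc G(T-g)$ then gives $\|H-I\|_\oo\le K\delta$, which is at most $\e$ for small $\delta$.

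For \emph{strong Lipschitz structural stability}, with $g=T+\phi$ and $\phi\in\mf{Lip}_\delta(\mc B)$, the map $g$ is a Lipschitz homeomorphism for small $\delta$ by a Neumann/contraction argument. Applying the above construction once gives $H_1$ with $TH_1=H_1g$. Swapping the roles gives $H_2$ with $gH_2=H_2T$; this solves $h=\mc G'(\phi\circ(I+h))$, which is a contraction in $C_b$ once $K\delta<1$. Then $H_1H_2$ commutes with $T$, and $H_1H_2-I$ is bounded. The main obstacle is \emph{uniqueness}: one must show that the only bounded continuous $u$ with $u\circ T=Tu$ (respectively $u\circ g=Tu$ type equations) is $0$. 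With a non-invariant $P$ this is not immediate. I would handle it by applying the right-inverse structure to the sequence $u(T^nx)=T^nu(x)$: a bounded full $T$-orbit $T^n v$ satisfies $v=P v+(I-P)v$ with $\|Pv\|=\|T^{-n}T^nPv\|$. This does not decay directly, so instead I plan to use the identity $v=\sum$-representation $v=(\mc G\mc L z)_0+$(bounded orbit) and prove that bounded full orbits vanish: $\|v\|\le\|PT^{-n}(T^nv)\|$… I expect to need $P$ replaced by a genuine invariant-type operator. If that fails, I would instead verify injectivity of $\mc L$ on $C_b$ via the duality argument used in Proposition~\ref{Prop_Pseudo-Hyperbolicity Spectral Characterization}. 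Once uniqueness is available, $H_1H_2=H_2H_1=I$ and $H_1$ is a homeomorphism, giving conjugacy with $\|H_1-I\|_\oo\le\e$.

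For part 2, I would use Theorem~\ref{Theor_Equivalence Shadowing and Spectrum}. Suppose $\lambda\in\sigma_{sur}(T)\cap\SP^1$; then $T-\lambda I$ is not surjective. The plan is to build a small Lipschitz perturbation $\phi$ such that $T+\phi$ is not conjugate to $T$ by a near-identity map. Alternatively, I would prove shadowing directly: given a $\delta$-pseudo-orbit, I would build $\phi$ small and Lipschitz, via a smooth bump/partition construction along the orbit, such that the pseudo-orbit becomes a true orbit of $T+\phi$. The conjugacy $H$ then sends it to a $T$-orbit within $\e$. The hard part is making $\phi$ globally Lipschitz with constant $\le\delta'$, which requires the points $x_n$ to be well separated. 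To achieve this, I would first replace the pseudo-orbit by a translated and rescaled one, using linearity: I scale by a large factor $R$, so that errors of size $R\delta$ must be absorbed with $\operatorname{Lip}(\phi)\lesssim R\delta/\text{separation}$.
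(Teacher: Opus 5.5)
Your topological-stability argument is correct and is the paper's own: the right inverse of $h\mapsto h\circ g-T\circ h$ on $C_b$, then $H=I+\mathcal G(T-g)$. For strong Lipschitz structural stability your maps $H_1,H_2$ are exactly the paper's $H'$ and $H$. \textbf{The gap is in showing they are mutually inverse.} The uniqueness you aim for is that $u=0$ is the only bounded continuous $u$ with $u\circ T=T\circ u$. This is false whenever $T$ is pseudo-hyperbolic but not hyperbolic. In that case $\partial\sigma(T)\subseteq\sigma_r(T)$ misses $\mathbb{S}^1$, so $\mathbb{S}^1\subseteq\operatorname{int}\sigma(T)$. Hence $T-I$ is surjective but not injective, and for $0\neq v\in\ker(T-I)$ the constant map $u\equiv v$ commutes with $T$. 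So bounded full orbits need not vanish, and $h\mapsto h\circ T-T\circ h$ is not injective on $C_b$. Neither an invariant $P$ (non-hyperbolic generalized hyperbolic operators behave the same way) nor a duality argument can rescue this.

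\textbf{The missing idea} is to use the explicit series for the right inverses together with the conjugacy relations. Write $H_1=I+h_1$ with $h_1=\mathcal G_g(-\phi)$, and $H_2=I+h_2$ with $h_2=\mathcal G_T(\phi\circ H_2)$. The relations $g^j\circ H_2=H_2\circ T^j$ for $j\in\mathbb{Z}$ turn the series for $h_1\circ H_2$ into $-\mathcal G_T(\phi\circ H_2)=-h_2$, so $H_1\circ H_2=I$ outright. Likewise $T^j\circ H_1=H_1\circ g^j$ gives $h_2\circ H_1=\mathcal G_g(\phi\circ H_2\circ H_1)$. Hence $w=H_2\circ H_1-I$ satisfies $w=\mathcal G_g(\phi\circ(I+w)-\phi)$, a contraction on $C_b$ with constant $\le K\delta<1$ that fixes $0$, so $w=0$. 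The only uniqueness needed is that of a contraction's fixed point.

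\textbf{Part 2 has no proof content.} The contrapositive route you name first, via Theorem \ref{Theor_Equivalence Shadowing and Spectrum}, is the paper's. But ``build a small Lipschitz $\phi$ so that $T+\phi$ is not near-identity conjugate to $T$'' is the entire proof, and you supply no construction. The paper proceeds as follows.
\begin{enumerate}
\item Since $\sigma_{sur}(T)=\sigma_{ap}(T^*)$, there is $f\in\mathcal{B}^*$ with $\|f\|=1$ and $\|T^*f-\lambda f\|<\eta$.
\item Pick $x_0$ with $f(x_0)=1$ and set $\phi(x)=\alpha\lambda A(f(x))x_0$, where $A(z)=z/2$ for $|z|\le 2$ and $A(z)=z/|z|$ otherwise.
\item Wherever $|f(H(x))|\ge 2$, one step gives $|f(H(Tx))|\ge|f(H(x))|+\alpha-\eta(1+\|x\|)$, while $T$ itself nearly preserves $|f|$ along orbits.
\item A conjugacy $H\circ T=(T+\phi)\circ H$ with $\|H-I\|_\infty\le 1$ then forces, for $z_0=4x_0$, both $|f(H(T^nz_0))-4\lambda^n|<3/2$ for $0\le n\le N$ and $|f(H(T^Nz_0))|>6$ once $\alpha N>8$. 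This is a contradiction.
\end{enumerate}

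Your alternative direct route fails as sketched. Realizing a pseudo-orbit as a true orbit of $T+\phi$ requires $\operatorname{Lip}(\phi)\ge\|(x_{n+1}-Tx_n)-(x_{m+1}-Tx_m)\|/\|x_n-x_m\|$. This ratio is unchanged by rescaling $x_n\mapsto Rx_n$, since errors and separations both scale by $R$. Moreover, a pseudo-orbit may revisit a point with two different successors, which no map $\phi$ can realize.
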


The following corollary gives an affirmative answer to
\cite[Question~15]{Bernardes-Shadowing_and_Structural_Stability}, which asks whether every expansive and strongly Lipschitz structurally stable operator is hyperbolic.

\begin{corollary}
Let $T\in\mc{GL}(\mc B)$. 
If $T$ is expansive and strongly Lipschitz structurally stable, then $T$ is hyperbolic.
\end{corollary}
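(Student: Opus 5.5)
The corollary should follow directly from Theorem~\ref{Theor_Pseudo-Hyperbolicity Lipschitz Stability Shadowing}(2) combined with the known characterization of hyperbolicity recalled in the introduction. If $T$ is strongly Lipschitz structurally stable, part~(2) of Theorem~\ref{Theor_Pseudo-Hyperbolicity Lipschitz Stability Shadowing} gives that $T$ has the shadowing property. Since $T$ is also expansive, the result of \cite{Bernardes-Shadowing_and_Structural_Stability} quoted in the introduction applies: an invertible operator on a complex Banach space is hyperbolic if and only if it is expansive and has the shadowing property. Hence $T$ is hyperbolic.

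If I wanted to avoid relying on that cited equivalence, I would argue spectrally. By Theorem~\ref{Theor_Equivalence Shadowing and Spectrum}, shadowing gives $\sigma_{sur}(T)\cap\SP^1=\emptyset$. Fix $\lambda\in\SP^1$; then $T-\lambda I$ is surjective, and it remains to show it is injective, since a bounded bijection is invertible. So suppose $Tx=\lambda x$ with $x\neq 0$. Then $\|T^n x\|=\|x\|$ for all $n\in\Z$, which contradicts expansivity applied to the unit vector $x/\|x\|$. Therefore $\sigma(T)\cap\SP^1=\emptyset$, and \cite[Lemma~1]{Eisenberg_Hedlund-Expansive_Automorphisms_of_Banach_Spaces} yields hyperbolicity.

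There is essentially no obstacle here: all the substantive work lies in part~(2) of Theorem~\ref{Theor_Pseudo-Hyperbolicity Lipschitz Stability Shadowing}, which is taken as given. The only points to check are that the complex-scalar hypothesis needed for Theorem~\ref{Theor_Equivalence Shadowing and Spectrum} and for the spectral characterization holds in the paper's standing assumptions, which it does, and that the eigenvector argument uses the two-sided expansivity as defined.
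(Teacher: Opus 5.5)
Your proposal is correct and matches the paper's argument. In the Remark right after the corollary, the paper gets shadowing from Theorem~\ref{Theor_Pseudo-Hyperbolicity Lipschitz Stability Shadowing}(2), deduces $\sigma_{sur}(T)\cap\SP^1=\emptyset$ from Theorem~\ref{Theor_Equivalence Shadowing and Spectrum}, and concludes via $\sigma(T)=\sigma_{pt}(T)\cup\sigma_{sur}(T)$ plus the absence of unimodular eigenvalues for expansive operators---this is your second (spectral) argument, while your first route just replaces that last step with the cited equivalence ``hyperbolic $\iff$ expansive $+$ shadowing,'' which is equally valid.
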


\begin{remark}
If $T\in\mc{GL}(\mc B)$ is expansive, then hyperbolicity, pseudo-hyperbolicity, strong Lipschitz structural stability, topological stability, and the shadowing property are equivalent. 
Indeed, if $T$ is pseudo-hyperbolic, strongly Lipschitz structurally stable, topologically stable, or has the shadowing property, then by Theorem
\ref{Theor_Pseudo-Hyperbolicity Lipschitz Stability Shadowing} and \cite[Corollary~15]{Bernardes_Caraballo_Darji_Favaro_Peris-Generalized_Hyperbolicity_Stability_and_Expansivity_for_Operators_on_Locally_Convex_Spaces}, $T$ has the shadowing property. Hence, by Theorem
\ref{Theor_Equivalence Shadowing and Spectrum}, $ \sigma_{sur}(T)\cap\SP^{1}=\emptyset$. If $T$ were not hyperbolic, then $ \sigma(T)\cap\SP^{1}\neq\emptyset$. Since $ \sigma(T)=\sigma_{pt}(T)\cup\sigma_{sur}(T)$, it would follow that $ \sigma_{pt}(T)\cap\SP^{1}\neq\emptyset$, 
contradicting the fact that an expansive operator has no eigenvalues on the unit circle. 
\end{remark}

Moreover, Theorem
\ref{Theor_Pseudo-Hyperbolicity Lipschitz Stability Shadowing}
also recovers the following result.

\begin{corollary}\cite[Theorem~1]{Bernardes_Messaoudi-A_Generalized_Grobman-Hartman Theorem}
Let $T\in\mc{GL}(\mc B)$. If $T$ is generalized hyperbolic, then $T$ is strongly Lipschitz structurally stable.
\end{corollary}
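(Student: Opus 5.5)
The plan is to reduce the corollary to part~1 of Theorem~\ref{Theor_Pseudo-Hyperbolicity Lipschitz Stability Shadowing}. It therefore suffices to show that every generalized hyperbolic $T\in\mc{GL}(\mc B)$ is pseudo-hyperbolic, as already remarked after Definition~\ref{def-pseudo}. I will spell out that step so the argument is self-contained.

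Let $\mc B=M\oplus N$ be the decomposition from Definition~\ref{def-pseudo}, and let $P$ be the projection onto $M$ along $N$. Since $M$ and $N$ are closed, the closed graph theorem shows that $P$ is bounded. Because $T(M)\sub M$, the restriction $T|_M$ is a bounded operator on the Banach space $M$. By hypothesis its spectral radius $r(T|_M)$ is strictly less than $1$.

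Fix $\beta$ with $\max\{r(T|_M),\,r(T^{-1}|_N)\}<\beta<1$. Gelfand's spectral radius formula gives $\ldav (T|_M)^n\rdav^{1/n}\to r(T|_M)$. Hence there is $C_1>0$ with $\ldav (T|_M)^n\rdav\le C_1\beta^n$ for all $n\ge0$. In the same way, since $T^{-1}(N)\sub N$, there is $C_2>0$ with $\ldav (T^{-1}|_N)^n\rdav\le C_2\beta^n$ for all $n\ge 0$.

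Now let $x\in\mc B$. Since $Px\in M$ and $M$ is $T$-invariant, we have $T^nPx=(T|_M)^nPx$. Since $(I-P)x\in N$ and $N$ is $T^{-1}$-invariant, we have $T^{-n}(I-P)x=(T^{-1}|_N)^n(I-P)x$. Therefore
\[
\ldav T^nPx\rdav\le C_1\ldav P\rdav\beta^n\ldav x\rdav,
\qquad
\ldav T^{-n}(I-P)x\rdav\le C_2\ldav I-P\rdav\beta^n\ldav x\rdav .
\]
Thus \eqref{Eq_Pseudo-Hyperbolicic Exponential Bounds} holds with $C=\max\{C_1\ldav P\rdav,\,C_2\ldav I-P\rdav\}$, and $T$ is pseudo-hyperbolic.

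Part~1 of Theorem~\ref{Theor_Pseudo-Hyperbolicity Lipschitz Stability Shadowing} then yields that $T$ is strongly Lipschitz structurally stable.

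There is no real obstacle here: all the substantive work lies in the theorem itself. The only points needing care are the boundedness of $P$ and the correct direction of the invariance. We use $T(M)\sub M$ and $T^{-1}(N)\sub N$, not equality, and that is exactly what the restricted iterates require.
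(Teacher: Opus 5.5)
Your proposal is correct and follows the paper's route: the paper obtains this corollary directly from part~1 of Theorem~\ref{Theor_Pseudo-Hyperbolicity Lipschitz Stability Shadowing}, using its remark after Definition~\ref{def-pseudo} that a generalized hyperbolic operator is pseudo-hyperbolic with $P$ the projection onto $M$. Your verification of that remark fills in details the paper leaves implicit: boundedness of $P$ via the closed graph theorem, and the exponential bounds via Gelfand's formula applied to $T|_M$ and $T^{-1}|_N$.
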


\paragraph{}

The following theorem shows that, for a broad class of operators, pseudo-hyperbolicity, topological stability, strong Lipschitz structural stability, and the shadowing property are equivalent. To state the theorem, a subspace $\mc V\sub\mc B$ is said to admit a \emph{closed complement} if there exists a closed subspace $\mc M\sub\mc B$ such that $\mc B=\mc V\oplus\mc M$.

\begin{theorem}\label{Theor_Banach Equivalence Pseudo-Hyperbolicity Lipschitz Stability Shadowing} 
Let $\mc{B}$ be a Banach space. Assume that the invertible operator $T\in\mc{GL}\lc\mc B \rc$ is such that 
\begin{equation}\label{Eq_Assumption Closed Complement}
\ker\lc T-\lambda I\rc
\text{ has a closed complement for every }
\lambda\in\SP^1.
\end{equation}
Then the following statements are equivalent:
\begin{enumerate}
    \item $T$ is pseudo-hyperbolic.
    \item $T$ is topologically stable.
    \item $T$ is strongly Lipschitz structurally stable.
    \item $T$ has the shadowing property.
\end{enumerate} 

\noindent
In particular, if $\mc{B}$ is a Hilbert space, then condition \eqref{Eq_Assumption Closed Complement} holds.
\end{theorem}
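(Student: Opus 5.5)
The plan is to close the cycle of implications using what is already available. By Theorem~\ref{Theor_Pseudo-Hyperbolicity Lipschitz Stability Shadowing}, (1) implies (2) and (1) implies (3), and (3) implies (4). By the Lee--Morales result cited in the introduction (see also \cite[Corollary~15]{Bernardes_Caraballo_Darji_Favaro_Peris-Generalized_Hyperbolicity_Stability_and_Expansivity_for_Operators_on_Locally_Convex_Spaces}), (2) implies (4). So the only new implication is (4) $\Rightarrow$ (1) under hypothesis \eqref{Eq_Assumption Closed Complement}. By Theorem~\ref{Theor_Equivalence Shadowing and Spectrum}, (4) gives $\sigma_{sur}(T)\cap\SP^1=\emptyset$. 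The task is therefore to build, from this spectral condition and the complementation hypothesis, an operator $P$ satisfying \eqref{Eq_Pseudo-Hyperbolicic Exponential Bounds}. I expect to route this through the spectral characterization of pseudo-hyperbolicity announced as Proposition~\ref{Prop_Pseudo-Hyperbolicity Spectral Characterization}. If I must prove that proposition myself, the plan is as follows.

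\textbf{Step 1 (structure on the circle).} Since $T-\lambda I$ is surjective for every $\lambda\in\SP^1$ and $\SP^1$ is compact, surjectivity is an open condition. Moreover, $\dim\ker(T-\lambda I)$ is locally constant on the set where $T-\lambda$ is surjective, by the stability of surjective operators with complemented kernel. The hypothesis \eqref{Eq_Assumption Closed Complement} makes every $T-\lambda I$, $\lambda\in\SP^1$, right invertible. Right invertibility is an open condition, and a right inverse $R_\lambda$ can be chosen depending holomorphically on $\lambda$ in a neighbourhood of $\SP^1$. I would patch local right inverses into a global one on an annulus $A=\{r<|\lambda|<1/r\}$. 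The patching uses that the set of right inverses is an affine space over operators mapping into the kernels, so a partition-of-unity / Cartan-type argument applies. A cleaner route is to use the idea of \cite{Dragicevic_Pituk-Duality_between_Shadowing_and_Uniform_Expansivity_in_Linear_Dynamics}: surjectivity on the circle lets one solve $x_{n+1}-Tx_n=y_n$ in $\ell^\infty(\Z,\mc B)$ via a bounded right inverse $\mathcal R$ of the operator $\mathcal T:(x_n)\mapsto(x_{n+1}-Tx_n)$. Surjectivity of $\mathcal T$ is equivalent to shadowing.

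\textbf{Step 2 (define $P$).} Using the bounded solution operator $\mathcal R$ of the difference equation, apply it to the input $y=e_0\otimes x$, i.e.\ $y_n=\delta_{n,-1}x$. This produces a bounded solution $(z_n)$ with $z_{n+1}=Tz_n$ for $n\neq -1$ and $z_0-Tz_{-1}=x$. Set $Px:=z_0$, adjusted so that $(I-P)x=-Tz_{-1}$. Then $(T^nPx)_{n\ge0}$ and $(T^{-n}(I-P)x)_{n\ge0}$ are bounded orbits, uniformly in $\|x\|\le1$. The decisive difficulty is that $\mathcal R$ must be \emph{linear} and shift-equivariant, and must give \emph{exponential} rather than merely bounded decay. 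Equivariance is what the complementation hypothesis buys. Via a Fourier/Laurent transform, a holomorphic right inverse $R(\lambda)$ of $T-\lambda I$ on the annulus $A$ gives $P$ by a contour integral $P=\frac{1}{2\pi i}\oint_{|\lambda|=1}R(\lambda)\,d\lambda$ (suitably normalised). Cauchy estimates on the circles $|\lambda|=r^{\pm1}$ then yield the bounds $C\beta^n$ with $\beta=r$.

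\textbf{Main obstacle.} The main obstacle is Step 1, producing a right inverse $R(\lambda)$ of $T-\lambda I$ depending holomorphically on $\lambda$ near $\SP^1$. Locally this is easy: $R_{\lambda_0}(I-(\lambda-\lambda_0)R_{\lambda_0})^{-1}$ works. Globally one must glue over the circle, which is a non-simply-connected set, and the gluing defect lives in $\ker(T-\lambda I)$. I would first try to show that $\ker(T-\lambda I)=\{0\}$ except for finitely many $\lambda$, or else exploit that the kernels form a holomorphic vector bundle over the annulus, which is trivial by a Banach-bundle Oka principle. That would allow the local right inverses to be corrected into a global one.

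For the Hilbert space clause: every closed subspace, in particular $\ker(T-\lambda I)$, has its orthogonal complement as a closed complement, so \eqref{Eq_Assumption Closed Complement} holds.
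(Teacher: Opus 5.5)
Your main line is the paper's proof. The hypothesis, combined with the fact \eqref{Eq_Right Inverse and Closed Complement} that a surjection has a bounded right inverse iff its kernel is complemented, gives $\sigma_r(T)\cap\SP^1=\sigma_{sur}(T)\cap\SP^1$, and the cycle then closes via Theorem~\ref{Theor_Pseudo-Hyperbolicity Lipschitz Stability Shadowing}, \cite[Corollary~15]{Bernardes_Caraballo_Darji_Favaro_Peris-Generalized_Hyperbolicity_Stability_and_Expansivity_for_Operators_on_Locally_Convex_Spaces}, Theorem~\ref{Theor_Equivalence Shadowing and Spectrum} and Proposition~\ref{Prop_Pseudo-Hyperbolicity Spectral Characterization}, whose hard direction the paper simply cites from Pituk.

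Your optional self-contained sketch of that direction is therefore not needed, and as written it is unfinished. In particular, the hope that $\ker(T-\lambda I)=\{0\}$ off a finite set cannot help: if $T$ is not hyperbolic and $\sigma_r(T)\cap\SP^1=\emptyset$, then every $\lambda\in\SP^1$ is an eigenvalue. So the holomorphic gluing of right inverses of $\lambda I-T$ on an annulus must be carried out genuinely, for instance by a one-sided Allan--Douglas local principle in an exponentially weighted operator-valued Wiener algebra.
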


Bilateral weighted shift operators form a class of operators on
$\ell^{p}\lc \Z \rc$, $1\le p<+\oo$, and $c_{0}\lc \Z \rc$ that satisfy condition
\eqref{Eq_Assumption Closed Complement}.
Here, the notation
$$
\ell^{p}\lc \Z \rc
=
\lfp
\lc x_{n}\rc_{n\in\Z}\in\C^{\Z}
:
\sum_{n\in\Z}\lav x_{n}\rav^{p}<+\oo
\rfp
\quad \text{and} \quad 
c_{0}\lc \Z \rc
=
\lfp
\lc x_{n}\rc_{n\in\Z}\in\C^{\Z}
:
\lav x_{n}\rav
\underset{|n|\to+\oo}{\longrightarrow}0
\rfp
$$
is used.
As an immediate consequence of Theorem
\ref{Theor_Banach Equivalence Pseudo-Hyperbolicity Lipschitz Stability Shadowing},
one recovers the following result due to Bayart
\cite{Bayart-Two_Problems_on_Weighted_Shifts_in_Linear_Dynamics}.

\begin{corollary}
Let $X=\ell^{p}(\Z)$, $1\le p<\infty$, or $X=c_{0}(\Z)$, and let
$(w_n)_{n\in\Z}\sub \C$ be a bounded sequence satisfying $ \inf_{n\in\Z}|w_n|>0$. 
Consider the bilateral weighted shift $T$ defined by
\[
T\bigl((x_n)_{n\in\Z}\bigr) 
=
(w_nx_{n+1})_{n\in\Z}.
\]
Then $T$ is strongly Lipschitz structurally stable if and only if it is shadowing.
\end{corollary}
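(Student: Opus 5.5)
The plan is to read this off from Theorem~\ref{Theor_Banach Equivalence Pseudo-Hyperbolicity Lipschitz Stability Shadowing}, specifically the equivalence of items 3 and 4 there. For that I need two facts: $T\in\mc{GL}(X)$, and condition \eqref{Eq_Assumption Closed Complement} holds. Once both are checked, the corollary is immediate.

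\emph{Invertibility.} Since $(w_n)$ is bounded, $\|T\|\le\sup_n|w_n|<\infty$. The map $S$ given by
\[
(S y)_n = \frac{y_{n-1}}{w_{n-1}}
\]
is bounded on $X$ with $\|S\|\le(\inf_n|w_n|)^{-1}$, and $TS=ST=I$. For $c_0(\Z)$ one also has to note that $S$ and $T$ preserve convergence to $0$ at $\pm\infty$. Both statements are direct coordinate computations.

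\emph{Closed complements of eigenspaces.} Fix $\lambda\in\SP^1$ and let $x\in\ker(T-\lambda I)$. The eigenvalue equation reads $w_nx_{n+1}=\lambda x_n$ for all $n\in\Z$. Since $w_n\neq0$ and $\lambda\neq0$, this recursion can be run forward and backward:
\[
x_{n+1}=\frac{\lambda}{w_n}\,x_n,\qquad x_{n-1}=\frac{w_{n-1}}{\lambda}\,x_n .
\]
So $x$ is determined by its coordinate $x_0$, and hence $\dim\ker(T-\lambda I)\le1$.

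A finite-dimensional subspace of a Banach space always has a closed complement. If the kernel is spanned by $v\neq0$, Hahn--Banach gives $f\in X^*$ with $f(v)=1$. Then $\ker f$ is a closed complement, because $x=f(x)v+(x-f(x)v)$ for every $x\in X$. The zero subspace is trivially complemented by $X$. Hence \eqref{Eq_Assumption Closed Complement} holds.

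Applying Theorem~\ref{Theor_Banach Equivalence Pseudo-Hyperbolicity Lipschitz Stability Shadowing} now gives that $T$ is strongly Lipschitz structurally stable if and only if it has the shadowing property.

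No step is a real obstacle. The only points needing care are that $\lambda\neq0$ and $\inf|w_n|>0$, which make the recursion two-sided and so force the eigenspace to be at most one-dimensional.
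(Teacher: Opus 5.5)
Your proposal is correct and follows the paper's route: the paper presents this corollary as an immediate consequence of Theorem~\ref{Theor_Banach Equivalence Pseudo-Hyperbolicity Lipschitz Stability Shadowing}, simply asserting that bilateral weighted shifts satisfy condition \eqref{Eq_Assumption Closed Complement}. You supply the details the paper leaves implicit, namely the invertibility of $T$ and the fact that each unimodular eigenspace has dimension at most one and is therefore complemented.
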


\paragraph{} 

Theorem
\ref{Theor_Banach Equivalence Pseudo-Hyperbolicity Lipschitz Stability Shadowing}
will follow by combining
Theorem \ref{Theor_Pseudo-Hyperbolicity Lipschitz Stability Shadowing}
with the following characterization of pseudo-hyperbolicity in terms of the right spectrum. 

\begin{proposition}\label{Prop_Pseudo-Hyperbolicity Spectral Characterization}
Let $\mc{B}$ be a complex Banach space. An operator $T\in\mc{GL}\lc\mc{B}\rc$ is pseudo-hyperbolic if and only if $ \sigma_r(T)\cap\SP^1=\emptyset $. 
\end{proposition}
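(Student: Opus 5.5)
Both directions will be argued through the lifted operator $\mathcal T$ acting on bounded two-sided sequences. Let $\ell^\infty(\Z,\mc B)$ be the Banach space of bounded sequences $(x_n)_{n\in\Z}$ with the sup norm, and define
\[
(\mathcal T \mathbf x)_n = x_{n} - T x_{n-1}.
\]

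\emph{($\Rightarrow$)} Suppose $T$ is pseudo-hyperbolic with $P$, $C$, $\beta$ as in \eqref{Eq_Pseudo-Hyperbolicic Exponential Bounds}. Fix $\lambda\in\SP^1$. We want a bounded right inverse of $T-\lambda I$, and the natural candidate is the ``Green operator''
\[
R_\lambda y \;=\; \sum_{n\ge 0}\lambda^{-n-1}T^nPy \;-\;\sum_{n\ge1}\lambda^{n-1}T^{-n}(I-P)y .
\]
Both series converge absolutely by the exponential bounds. Set $S=\lambda^{-1}T$. Then a telescoping computation gives
\[
(T-\lambda)\sum_{n\ge0}\lambda^{-n-1}T^nPy=-Py
\]
for the first sum, and the second sum similarly produces $-(I-P)y$ up to sign. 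Hence $(T-\lambda)(-R_\lambda)=I$, after fixing signs. No invariance of $P$ is needed, because $T-\lambda$ is applied on the left of each series and the series terminate telescopically. Therefore $\sigma_r(T)\cap\SP^1=\emptyset$.

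\emph{($\Leftarrow$)} This is the main obstacle: we must produce a single operator $P$ working for all $n$, starting from right invertibility at each unimodular $\lambda$. The plan is as follows.

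First, $\sigma_r(T)$ is closed and the set of right-invertible operators is open. Moreover, by the Allan–Zemánek theory (a right inverse perturbs analytically), we can choose right inverses $R_\lambda$ of $T-\lambda$ depending analytically on $\lambda$ in a neighborhood of $\SP^1$. A locally analytic choice always exists. To make the local choices glue into a global one on an annulus, I would use the standard fact that the right inverse can be chosen as $R_\lambda = R_{\lambda_0}\bigl(I-(\lambda-\lambda_0)R_{\lambda_0}\bigr)^{-1}$, which is holomorphic. Gluing then needs a cohomology/Cousin-type argument, or more simply a direct approach avoiding gluing, described next.

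The direct alternative is to show that $\mathcal T$ is surjective on $\ell^\infty(\Z,\mc B)$ with a bounded, \emph{translation-equivariant} right inverse. Such an inverse is given by a kernel $G(n-m)$ satisfying $\sum_k\|G(k)\|<\infty$. Then put $P=G(0)$, so that $x_n=\sum_m G(n-m)y_m$ solves $x_n-Tx_{n-1}=y_n$. One checks that $G(k)=T^kG(0)$ for $k\ge0$ and $G(-k)=-T^{-k}(I-G(0))$ for $k\ge1$, by comparing with the sequence $y=\delta_0 y$.

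To build $G$ with exponential decay, take an analytic right inverse $R_\lambda$ on an annulus $\beta<|\lambda|<\beta^{-1}$. Cover the annulus by finitely many discs, combine via a partition on the circle, and then correct by solving the resulting $\bar\partial$-type (Cousin) problem. Alternatively, I would try the Fourier/Laurent approach: take $G(k)$ to be the Laurent coefficients
\[
G(k)=\frac{1}{2\pi i}\oint_{\SP^1}\lambda^{-k-1}\,(-R_\lambda)\,d\lambda ,
\]
which decay exponentially by Cauchy estimates on the annulus. The key identity $(T-\lambda)R_\lambda=I$ translates exactly into the convolution relation $T G(k-1)-G(k)=-\delta_{k0}I$, and from this relation the desired form $G(k)=T^kG(0)$ and $G(-k)=-T^{-k}(I-G(0))$ follows. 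The exponential bounds \eqref{Eq_Pseudo-Hyperbolicic Exponential Bounds} then follow with $P=G(0)$.

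The hard step is therefore producing an analytic (or even merely continuous, which suffices if we accept summable rather than exponential decay initially) global selection $\lambda\mapsto R_\lambda$ on $\SP^1$. Smoothness would be enough to get summable coefficients, and a Neumann-series bootstrap with a slightly shifted contour would then upgrade the decay to exponential. I would first try a continuous selection via a partition of unity: a convex combination of right inverses is still a right inverse, since $(T-\lambda)\sum\phi_iR_i=\sum\phi_i I=I$. This gives a smooth global selection for free. For the decay, I would then use the openness argument to extend this selection analytically to a thin annulus, which would make the Laurent coefficients decay geometrically.
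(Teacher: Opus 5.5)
Your ($\Rightarrow$) direction is correct and is exactly the paper's argument. Your $R_\lambda$ is the operator in \eqref{Eq_Right Inverse of Generalized Hyperbolic}, and the telescoping gives $(\lambda I-T)R_\lambda=I$.

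For ($\Leftarrow$) the paper gives no argument of its own; it cites Pituk. Your reduction is sound once you have a holomorphic right inverse $\lambda\mapsto R_\lambda$ of $T-\lambda I$ on an annulus $\rho<|\lambda|<\rho^{-1}$. Cauchy estimates then give $\|G(k)\|\le M\beta^{|k|}$ for any $\beta\in(\rho,1)$. The coefficient identity gives $G(k)=T^kG(0)$ and $G(-k)=-T^{-k}(I-G(0))$, and $P=G(0)$ works. There is one minor slip: for that recursion $G(k)$ must be the coefficient of $\lambda^{-k-1}$ in $-R_\lambda$, i.e.\ $G(k)=\frac{1}{2\pi i}\oint_{\SP^1}\lambda^{k}(-R_\lambda)\,d\lambda$. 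Your formula extracts the coefficient of $\lambda^{k}$, which is the correct $G(-k-1)$.

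The gap is the step you yourself single out, and the mechanism you propose for it fails as stated.
\begin{itemize}
\item The partition-of-unity selection $R(\lambda)=\sum_i\phi_i(\lambda)R_i(\lambda)$ is only $C^\infty$ on $\SP^1$. Right inverses are determined only up to $\ker(T-\lambda I)$-valued terms, and the $\phi_i$ are not real-analytic. So $R$ is in general not the trace of any holomorphic function, and it cannot be ``extended analytically to a thin annulus''.
\item The openness formula $R_{\lambda_0}(I-(\lambda-\lambda_0)R_{\lambda_0})^{-1}$ only gives local holomorphic right inverses. Gluing them is exactly the Cousin problem you wanted to bypass.
\item Shifting the contour does not help by itself. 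Running your construction on $|\lambda|=r<1$ yields an operator $P_r$ with $\|T^nP_r\|\lesssim r^n$ but only $\|T^{-n}(I-P_r)\|\lesssim r^{-n}$. That is exponential decay on one side only, and with a different $P_r$ for each $r$.
\end{itemize}
Summable decay of $G$ alone does not meet \eqref{Eq_Pseudo-Hyperbolicic Exponential Bounds}.

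The missing idea is a short Neumann-series correction of a polynomial approximant, rather than of the selection itself.
\begin{itemize}
\item Approximate your continuous right inverse $R$ uniformly on $\SP^1$ by a Laurent polynomial $p$ with $\sup_{\SP^1}\|R-p\|<\frac{1}{2(\|T\|+1)}$. Fej\'er means work for Banach-valued functions.
\item Set $e(\lambda):=I-(T-\lambda I)p(\lambda)$. On $\SP^1$ this equals $(T-\lambda I)(R(\lambda)-p(\lambda))$, so $\|e\|\le\frac12$ there.
\item Since $e$ is a Laurent polynomial, continuity gives $\|e(\lambda)\|<1$ on some closed annulus $\rho\le|\lambda|\le\rho^{-1}$. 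Then
\[
\tilde R(\lambda):=p(\lambda)\bigl(I-e(\lambda)\bigr)^{-1}
\]
is a holomorphic right inverse of $T-\lambda I$ there.
\end{itemize}
Alternatively, you can invoke Allan's theorem: a holomorphic family of right-invertible elements on an open subset of $\C$ has a holomorphic right inverse. With this step added, your argument gives a self-contained proof of the direction the paper only cites.
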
 

\begin{remark}
A pseudo-hyperbolic operator $T$ is either hyperbolic or satisfies
$\SP^{1}\sub \sigma_{pt}(T)\setminus\sigma_{r}(T)$.
Indeed, the inclusion
$\partial\sigma(T) \sub \sigma_{r}(T)$,
combined with Proposition 
\ref{Prop_Pseudo-Hyperbolicity Spectral Characterization},
shows that
$\partial\sigma(T)\cap\SP^{1}=\varnothing$.
Since $\SP^{1}$ is connected, it follows that either
$\SP^{1}\sub \rho(T)$
or
$\SP^{1} \sub \operatorname{int}\sigma(T)$.
The former is precisely the condition of hyperbolicity. In the latter case, since
$\sigma(T)=\sigma_{pt}(T)\cup\sigma_{r}(T)$
and
$\sigma_{r}(T)\cap\SP^{1}=\varnothing$,
one obtains
$\SP^{1} \sub \sigma_{pt}(T)\setminus\sigma_{r}(T)$.
\end{remark}

Since $\sigma_{sur}\lc T\rc \sub \sigma_r\lc T\rc$, Proposition \ref{Prop_Pseudo-Hyperbolicity Spectral Characterization} and Theorem \ref{Theor_Equivalence Shadowing and Spectrum} recover the fact that every pseudo-hyperbolic operator (and, in particular, every generalized hyperbolic operator) has the shadowing property \cite[Theorem A]{Bernardes-Expansivity_and_Shadowing_in_Linear_Dynamics}.

\paragraph{}

Finally, combining Theorem \ref{Theor_Equivalence Shadowing and Spectrum} and Proposition \ref{Prop_Pseudo-Hyperbolicity Spectral Characterization} yields the following result, which resolves the open problem of whether every shadowing invertible operator is generalized hyperbolic \cite[Problem F]{Bernardes_Caraballo_Darji_Favaro_Peris-Generalized_Hyperbolicity_Stability_and_Expansivity_for_Operators_on_Locally_Convex_Spaces}.

\begin{theorem}\label{Theor_Shadowing but not Generalized Hyperbolic}
For every $1<p<+\oo$, $p\neq2$, there exists an invertible operator $T$ on $\ell^p(\mathbb N)$ which has the shadowing property but is not pseudo-hyperbolic. In particular, $T$ is not generalized hyperbolic.
\end{theorem}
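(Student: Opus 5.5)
The plan is to reduce everything to spectral conditions. By Theorem~\ref{Theor_Equivalence Shadowing and Spectrum} it suffices to build $T\in\mc{GL}(\ell^p(\N))$ with $T-\lambda I$ surjective for every $\lambda\in\SP^1$. By Proposition~\ref{Prop_Pseudo-Hyperbolicity Spectral Characterization} it then suffices that $T-\lambda_0 I$ fails to be right invertible for at least one $\lambda_0\in\SP^1$. The key abstract observation is the following. If $S$ is surjective, then $S$ is right invertible if and only if $\ker S$ is complemented. Indeed, a right inverse $R$ gives the complement $R(\mc B)$, and conversely the open mapping theorem inverts $S$ on a closed complement. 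So the whole task is to produce an invertible $T$ that has the shadowing property and some unimodular eigenspace $\ker(T-\lambda_0I)$ which is \emph{not} complemented. This is exactly what condition \eqref{Eq_Assumption Closed Complement} of Theorem~\ref{Theor_Banach Equivalence Pseudo-Hyperbolicity Lipschitz Stability Shadowing} forbids, so $p\neq 2$ is necessary.

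\textbf{Banach space input.} For $1<p<\infty$, $p\ne2$, I would use three facts. First, $X=\ell^p(\N)$ satisfies $X\cong X\oplus X\cong\ell^p(\Z;X)$. Second, there is a closed non-complemented subspace $M\sub X$ with $X/M\cong X$; this is classical, by Lindenstrauss--Tzafriri together with the Pe{\l}czy\'nski decomposition and the fact that $\ell^p$ is a quotient of itself with non-complemented kernel, for instance via a Rosenthal-type construction. Fix a surjection $q:X\to X$ with $\ker q=M$.

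\textbf{The model operator.} I would work on $E=\ell^p(\Z;X)\cong\ell^p(\N)$ and start from a generalized hyperbolic bilateral weighted shift $G$. Take
\[
(Gx)_n=2x_{n+1}\ (n<0),\qquad (Gx)_n=\tfrac12x_{n+1}\ (n\ge0).
\]
For $1/2<|\lambda|<2$, the operator $G-\lambda$ is surjective and its kernel consists of geometric sequences $(\lambda^n v)_n$ tied to a single free vector $v\in X$. Thus $\ker(G-\lambda)\cong X$, and it is complemented.

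I would then modify the shift only at the junction site. The link between coordinates $0$ and $1$ is replaced by a map built from the exact sequence $0\to M\to X\xrightarrow{q}X\to0$, using $X\cong X\oplus X$ to split a coordinate into two copies. The aim is twofold. First, the new $T$ should remain invertible; its inverse should be computable coordinatewise, since away from the junction $T$ is a weighted shift. Second, the free vector parametrizing $\ker(T-\lambda)$ should range over $M$ rather than over $X$, so that $\ker(T-\lambda)\cong M$ sits inside $E$ as a non-complemented subspace. More precisely, a projection of $E$ onto $\ker(T-\lambda)$, composed with the coordinate maps, would yield a projection of $X$ onto $M$. Surjectivity of $T-\lambda$ on $\SP^1$ would be checked by solving $(T-\lambda)x=y$ explicitly. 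Away from the junction this is the usual geometric series summed forward on one side and backward on the other. At the junction one needs to solve an equation of the form $q(u)=w$, which is possible precisely because $q$ is onto. Boundedness of the resulting solution map is not required; only surjectivity is.

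\textbf{Main obstacle.} I expect the hardest step to be designing the junction map so that all three requirements hold simultaneously: global invertibility of $T$, surjectivity of $T-\lambda$ on all of $\SP^1$, and identification of the kernel with a copy of $M$ that is non-complemented \emph{in $E$}. The last point is the key one, because non-complementation must be transferred from $X$ to $E$. I would first try the variant in which the kernel sits in a single coordinate slot, so that a projection onto it restricts to a projection of $X$ onto $M$. If that fails, I would fall back on the dual description: right invertibility of $T-\lambda$ is equivalent to $(T-\lambda)^*$ being left invertible with complemented range, and I would test non-complementation through the annihilator $M^\perp\sub X^*$.
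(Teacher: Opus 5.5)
Your reduction is correct and matches the paper's. By Theorem~\ref{Theor_Equivalence Shadowing and Spectrum}, Proposition~\ref{Prop_Pseudo-Hyperbolicity Spectral Characterization} and the right-inverse/complement criterion, you need an invertible $T$ with $T-\lambda I$ onto for every $\lambda\in\SP^1$ and $\ker(T-\lambda_0I)$ non-complemented for some $\lambda_0\in\SP^1$. The ingredients you list ($X\cong X\oplus X$, a surjection $q$ with non-complemented kernel) are also the right ones.

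The genuine gap is that the proposal never constructs $T$. The junction map, which you yourself call the main obstacle, is the whole content of the theorem, and what you do write down does not work.

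First, your model shift has its weights reversed. With $(Gx)_n=2x_{n+1}$ for $n<0$ and $(Gx)_n=\tfrac12x_{n+1}$ for $n\ge0$, an eigenvector must satisfy $x_n=(2\lambda)^nx_0$ for $n\ge0$ and $x_{-k}=(2/\lambda)^kx_0$ for $k\ge1$. Hence $\ker(G-\lambda I)=\{0\}$ for every $\lambda$, while $\ker(G^*-\lambda I)\neq\{0\}$ for $\tfrac12<|\lambda|<2$. So $G-\lambda I$ is not onto for $\lambda\in\SP^1$, and this $G$ does not even have the shadowing property.

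Second, even with the weights swapped, turning a single link into an invertible $L:X\to X$ changes nothing. The kernel equations $x_{n+1}=\lambda L_n^{-1}x_n$ are solvable from any $x_0\in X$. If $\Phi:X\to E$ sends $x_0$ to the resulting kernel element and $\pi_0$ is evaluation at site $0$, then $\Phi\pi_0$ is a bounded projection onto $\ker(T-\lambda I)$. A non-invertible piece such as $q$ must enter, with a splitting $X\cong X\oplus X$ restoring invertibility, and you give no such design. Your claim that one then only has to solve $q(u)=w$ at the junction is unsupported. In an actual design the junction equation depends on $\lambda$, and surjectivity of $q$ says nothing about $q+cI$ for $c\neq0$.

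The missing idea is that surjectivity is stable under small perturbations. With $A=V\circ q$ onto and $\ker A=M$, openness of $\rho_{sur}(A)$ gives $\e>0$ such that $A+cI$ is onto for $|c|<\e$. A large parameter $\alpha$ then makes the $\lambda$-dependent part uniformly small on $\SP^1$. The paper does this with no shift at all. On $X\oplus X\cong X$ it takes $\tilde T(u,v)=(v,u+\alpha Av)$ with $\alpha=4/\e$.
\begin{itemize}
\item $\tilde T$ is invertible, with inverse $(r,s)\mapsto(s-\alpha Ar,r)$.
\item $\ker(\tilde T-I)=\{(u,u):u\in M\}$ is non-complemented.
\item For $|\lambda|=1$, the equation $(\tilde T-\lambda I)(u,v)=(r,s)$ reduces to $v=r+\lambda u$ and $\alpha\lambda\bigl(A+\alpha^{-1}(\lambda^{-1}-\lambda)I\bigr)u=s-\alpha Ar+\lambda r$. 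This is solvable because $|\alpha^{-1}(\lambda^{-1}-\lambda)|\le2/\alpha<\e$.
\end{itemize}

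Your Banach-space input is also not justified as written. Lindenstrauss--Tzafriri only yields some non-complemented subspace, with no control of the quotient. The third fact you invoke is precisely the statement needed, so it must be cited directly, as the paper does. For $p>2$ the paper cites Szankowski. For $1<p<2$ it dualizes Rosenthal's non-complemented copy $N\cong\ell^q$ inside $\ell^q$: with $J:\ell^q\to N\hookrightarrow\ell^q$, the adjoint $J^*$ maps $\ell^p$ onto $\ell^p$. A right inverse $R$ of $J^*$ would make $JR^*$ a projection onto $N$, which is impossible.
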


It will be evident from the proof of Theorem \ref{Theor_Shadowing but not Generalized Hyperbolic} that its conclusion holds more generally. Namely, it is true for every Banach space $\mc{B}$ satisfying $\mc{B}\cong\mc{B}\oplus\mc{B}$ and admitting a closed non-complemented subspace such that $ \mc{B} / M \cong \mc{B}$.

\section{Proofs of Theorems \ref{Theor_Pseudo-Hyperbolicity Lipschitz Stability Shadowing} and \ref{Theor_Equivalence Shadowing and Spectrum}}\label{Sec_Stability}

Let $\mc B$ be a Banach space. Assertions~{\rm(1)} and~{\rm(2)} of Theorem
\ref{Theor_Pseudo-Hyperbolicity Lipschitz Stability Shadowing}
are proved separately.

\begin{remark}
In \cite[Theorem~1]{Bernardes_Messaoudi-A_Generalized_Grobman-Hartman Theorem}, it was shown that generalized hyperbolicity implies strong Lipschitz structural stability. In that proof, the conjugacy required by strong Lipschitz structural stability is obtained through an application of the Banach Fixed Point Theorem, made possible by the decomposition $\mc{B}=M\oplus N$ arising from generalized hyperbolicity. The method used to prove Assertion {\rm(1)} of Theorem \ref{Theor_Pseudo-Hyperbolicity Lipschitz Stability Shadowing} is different, since such an invariant decomposition is not necessarily available in the pseudo-hyperbolic setting.
\end{remark}

\smallskip

\begin{proof}[Proof of Theorem \ref{Theor_Pseudo-Hyperbolicity Lipschitz Stability Shadowing} {\rm(1)}]

Let $T\in\mc{GL}\lc\mc{B}\rc$ be a pseudo-hyperbolic operator, and let $P:\mc{B}\to\mc{B}$, $C>0$, and $\beta\in\lc0,1\rc$ satisfy \eqref{Eq_Pseudo-Hyperbolicic Exponential Bounds}. Set $Q=I-P$.

\paragraph{Proof of strong Lipschitz structural stability.}

Consider the complex Banach space
$$
U_b\lc\mc{B}\rc
=
\lfp
f:\mc{B}\to\mc{B}
:\;
f \text{ is bounded and uniformly continuous}
\rfp,
$$
equipped with the supremum norm.
The proof is divided into the following five lemmas.

\begin{lemma}\label{Lem_Right Inverse Psi}
Let $g:\mc{B}\to\mc{B}$ be a uniform homeomorphism. Then the linear operator 
$ \Psi_g:U_b\lc\mc{B}\rc\longrightarrow U_b\lc\mc{B}\rc$, 
defined by
\begin{equation}\label{Eq_Lemma RI Psi Map}
\Psi_g(f)=f\circ g-T\circ f,
\end{equation}
is bounded and admits a bounded linear right inverse $
W_g:U_b\lc\mc{B}\rc\longrightarrow U_b\lc\mc{B}\rc $ 
given by
\begin{equation}\label{Eq_W Right Inverse}
W_g(f)(x)
=
\sum_{k=0}^{\infty}
T^kP\bigl(f(g^{-k-1}(x))\bigr)
-
\sum_{k=1}^{\infty}
T^{-k}Q\bigl(f(g^{k-1}(x))\bigr)\text{, } \qquad \forall x\in \mc{B}.
\end{equation}
Moreover,
\begin{equation}\label{Eq_W Bound}
\|W_g\|
\le
C\frac{1+\beta}{1-\beta}.
\end{equation}

\end{lemma}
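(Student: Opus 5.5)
The proof splits into four verifications: $\Psi_g$ is a bounded operator on $U_b(\mc B)$, the series defining $W_g$ converge to an element of $U_b(\mc B)$ with the claimed norm bound, and $\Psi_g W_g = I$.

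\emph{Boundedness of $\Psi_g$.} Since $g$ is a uniform homeomorphism, $f\circ g$ is bounded and uniformly continuous whenever $f$ is. Since $T$ is a bounded linear operator, $T\circ f$ is bounded and uniformly continuous as well. Hence $\Psi_g$ maps $U_b(\mc B)$ into itself. Linearity is clear, and
\[
\|\Psi_g f\|_\oo\le (1+\|T\|)\|f\|_\oo .
\]

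\emph{Convergence and the norm bound.} Apply \eqref{Eq_Pseudo-Hyperbolicic Exponential Bounds} to the vector $f(g^{-k-1}(x))$. For every $x$ this gives
\[
\|T^kPf(g^{-k-1}(x))\|\le C\beta^k\|f\|_\oo,
\qquad
\|T^{-k}Qf(g^{k-1}(x))\|\le C\beta^k\|f\|_\oo .
\]
Both series therefore converge absolutely and uniformly in $x$. Summing the geometric bounds yields
\[
\|W_g f\|_\oo\le C\Big(\sum_{k\ge0}\beta^k+\sum_{k\ge1}\beta^k\Big)\|f\|_\oo=C\frac{1+\beta}{1-\beta}\|f\|_\oo ,
\]
which is \eqref{Eq_W Bound}.

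\emph{$W_g f$ lies in $U_b(\mc B)$.} This is the step needing some care. Each $g^{\pm j}$ is uniformly continuous, being a finite composition of uniformly continuous maps. Each term $x\mapsto T^kPf(g^{-k-1}(x))$ is then uniformly continuous, as is each term of the second series. The partial sums are therefore in $U_b(\mc B)$. They converge uniformly, with tails bounded by $2C\beta^N\|f\|_\oo/(1-\beta)$. A uniform limit of bounded uniformly continuous functions is bounded and uniformly continuous, so $W_g f\in U_b(\mc B)$. One could also do a direct $\varepsilon/3$ argument. Linearity of $W_g$ is immediate.

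\emph{Right inverse.} It remains to compute $\Psi_g W_g f=(W_gf)\circ g-T\circ W_gf$. Because the convergence is uniform, $T$ may be applied termwise. First,
\[
(W_gf)(g(x))=\sum_{k\ge0}T^kPf(g^{-k}(x))-\sum_{k\ge1}T^{-k}Qf(g^{k}(x)).
\]
Second,
\[
T(W_gf)(x)=\sum_{k\ge1}T^{k}Pf(g^{-k}(x))-\sum_{k\ge0}T^{-k}Qf(g^{k}(x)),
\]
where the index has been shifted in each series. Subtracting, every term cancels except the $k=0$ terms, leaving $Pf(x)+Qf(x)=f(x)$. Hence $\Psi_gW_g=I$.

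The only genuinely delicate point is the uniform continuity of $W_gf$. It is handled by uniform convergence together with the uniform continuity of the iterates $g^{\pm j}$. No invariance of $P$ under $T$ is needed anywhere.
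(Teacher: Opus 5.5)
Your proposal is correct. The pseudo-hyperbolic exponential bounds give absolute and uniform convergence with constant $C\frac{1+\beta}{1-\beta}$, uniform limits of uniformly continuous partial sums place $W_g f$ in $U_b(\mathcal{B})$, and the index shift leaves only $Pf+Qf=f$ in $\Psi_g W_g f$. The paper states this lemma without writing out a proof, but your direct verification is the argument it relies on, including when it says the lemma carries over verbatim to $C_b(\mathcal{B})$, where continuity replaces uniform continuity.
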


\paragraph{}

Since $T$ is invertible, let $ c=\|T^{-1}\|^{-1} > 0$.  
Fix $0<\e<1$ and define
\begin{equation}\label{Eq_epsilon-delta}
\delta
:=
\min
\lfp
{c \over 3},
\frac{\e(1-\beta)}{C(1+\beta)}
\rfp.
\end{equation}
The next classical lemma (see \cite[Theorem 1.5]{Hirsch-Pugh-Stable_Manifolds}) shows that, for every
$\phi\in\mf{Lip}_{\delta}\lc\mc{B}\rc$, the map $T+\phi$ is a uniform homeomorphism.

\begin{lemma}\label{Lem_T+phi Uniform Homeomorphism}
Let $T\in\mc{GL}(\mc{B})$ and $c = \ldav T^{-1} \rdav^{-1} > 0$. If
$\phi\in\mf{Lip}_{\delta}\lc\mc{B}\rc$
with $0 < \delta < c / 2 $, then the map
$T+\phi:\mc{B}\to\mc{B}$
is a bi-Lipschitz homeomorphism. In particular,
$T+\phi$
is a uniform homeomorphism.
\end{lemma}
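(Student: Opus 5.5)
The plan is to write $T+\phi = T(I + T^{-1}\phi)$ and show that $I+T^{-1}\phi$ is a bi-Lipschitz homeomorphism. Since $T$ is a linear isomorphism, the composition $T+\phi$ will then be bi-Lipschitz as well. Set $\psi := T^{-1}\phi$. Then $\operatorname{Lip}(\psi)\le \|T^{-1}\|\operatorname{Lip}(\phi)\le \delta/c < 1/2$, so $\psi$ is a strict contraction with constant $\kappa:=\delta/c<1/2$. The boundedness $\|\phi\|_\infty\le\delta$ is not needed for this lemma; only the Lipschitz bound is used.

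The first step is the lower Lipschitz bound, which gives injectivity. For $x,y\in\mc B$,
\[
\|(T+\phi)(x)-(T+\phi)(y)\|\ge \|T(x-y)\|-\|\phi(x)-\phi(y)\|\ge (c-\delta)\|x-y\|,
\]
and $c-\delta>c/2>0$. So $T+\phi$ is injective, and its inverse, on its range, is Lipschitz with constant at most $(c-\delta)^{-1}$. The upper bound $\operatorname{Lip}(T+\phi)\le \|T\|+\delta$ is immediate.

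The second step is surjectivity, which I expect to be the only substantive point. Given $z\in\mc B$, solving $Tx+\phi(x)=z$ is equivalent to finding a fixed point $x$ of the map
\[
F_z(x) := T^{-1}z - T^{-1}\phi(x).
\]
This map is a contraction of the complete space $\mc B$ with constant $\kappa<1$. By the Banach Fixed Point Theorem it has a unique fixed point, so $T+\phi$ is onto.

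Combining the two steps, $T+\phi$ is a bijection that is Lipschitz and has a Lipschitz inverse. It is therefore a bi-Lipschitz homeomorphism, and in particular a uniform homeomorphism, since Lipschitz maps are uniformly continuous. The argument works for any $\delta<c$; the hypothesis $\delta<c/2$ gives a comfortable margin, with inverse Lipschitz constant at most $2/c$.
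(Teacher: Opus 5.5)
Your proof is correct. The bound $(c-\delta)\|x-y\|$ gives injectivity and a Lipschitz inverse, and the contraction $F_z(x)=T^{-1}z-T^{-1}\phi(x)$ gives surjectivity. The paper does not prove this lemma itself; it cites it as the classical Lipschitz inverse function theorem (Hirsch--Pugh, Theorem~1.5), whose standard proof is this same contraction-mapping argument. Your remarks that only $\delta<c$ is needed and that $\|\phi\|_\infty\le\delta$ plays no role are accurate.
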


\paragraph{} 

Fix $\phi\in\mf{Lip}_{\delta}\lc\mc{B}\rc$ and define $ V_{\phi}:U_b\lc\mc{B}\rc
\longrightarrow
U_b\lc\mc{B}\rc $ 
by
$$
V_{\phi}(f)
=
W_T\bigl(\phi\circ(I+f)\bigr),
$$
where $W_T:U_b\lc\mc{B}\rc\longrightarrow U_b\lc\mc{B}\rc$ denotes the right inverse of $\Psi_T$ provided by Lemma \ref{Lem_Right Inverse Psi}. The map $V_{\phi}$ is a contraction with Lipschitz constant $ \textrm{Lip}(V_{\phi})
\le
\e $. 
Indeed, for every $f,g\in U_b\lc\mc{B}\rc$,
\begin{align*}
\ldav
W_T\bigl(\phi\circ(I+f)\bigr)
-
W_T\bigl(\phi\circ(I+g)\bigr)
\rdav_{\infty} \underset{\eqref{Eq_W Bound}}{\le}
C
\frac{1+\beta}{1-\beta}
\,
\delta
\,
\ldav
f-g
\rdav_{\infty} \underset{\eqref{Eq_epsilon-delta}}{\le}
\e
\,
\ldav
f-g
\rdav_{\infty}.
\end{align*}
Hence, the Banach Fixed Point Theorem yields a unique
$h\in U_b\lc\mc{B}\rc$
satisfying
\begin{equation}\label{eq_h}
W_T\bigl(\phi\circ(I+h)\bigr)
=
h.
\end{equation}

\begin{lemma}\label{Lem_Fix Point of V}
Let $h\in U_b\lc\mc{B}\rc$ denote the unique fixed point of $V_{\phi}$. Then
\begin{equation}\label{Eq_Conjugation Lipschitz Stability}
\|h\|_{\infty}
\le
\e
\qquad\text{and}\qquad
(I+h)\circ T
=
(T+\phi)\circ(I+h).
\end{equation}
\end{lemma}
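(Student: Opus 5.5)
The two assertions of Lemma~\ref{Lem_Fix Point of V} will be handled separately, using only the fixed point identity \eqref{eq_h} and Lemma~\ref{Lem_Right Inverse Psi} applied with $g=T$, which is a linear uniform homeomorphism.

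\emph{The bound on $h$.} Since $\phi\in\mf{Lip}_{\delta}\lc\mc B\rc$, we have $\ldav\phi\rdav_\oo\le\delta$. Hence $\phi\circ(I+h)$ belongs to $U_b\lc\mc B\rc$, being bounded and uniformly continuous as a composition of a Lipschitz map with a uniformly continuous one, and its sup norm is at most $\delta$. Then \eqref{Eq_W Bound} gives
\[
\|h\|_\oo=\ldav W_T\bigl(\phi\circ(I+h)\bigr)\rdav_\oo\le C\frac{1+\beta}{1-\beta}\,\delta\le\e
\]
by the choice \eqref{Eq_epsilon-delta}.

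\emph{The conjugacy equation.} Apply $\Psi_T$ to both sides of \eqref{eq_h}. Since $W_T$ is a right inverse of $\Psi_T$, we get
\[
\Psi_T(h)=\phi\circ(I+h),\qquad\text{that is,}\qquad h\circ T-T\circ h=\phi\circ(I+h).
\]
Adding $T=T\circ I$ to both sides and using linearity of $T$, so that $T\circ(I+h)=T+T\circ h$, we rewrite this as
\[
T+h\circ T=T\circ(I+h)+\phi\circ(I+h),
\]
which is exactly $(I+h)\circ T=(T+\phi)\circ(I+h)$.

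\emph{Main point to check.} The only step needing care is the right-inverse property $\Psi_T\circ W_T=\mathrm{id}$, which is assumed from Lemma~\ref{Lem_Right Inverse Psi}. Should it need verification, substitute $g=T$ into \eqref{Eq_W Right Inverse} and compute $W_T(f)(Tx)-T\,W_T(f)(x)$. The $P$-sum becomes
\[
\sum_{k\ge0}T^kPf(T^{-k}x)-\sum_{k\ge0}T^{k+1}Pf(T^{-k-1}x),
\]
which telescopes to $Pf(x)$. The $Q$-sum similarly telescopes to $Qf(x)$. Together these give $(P+Q)f(x)=f(x)$, with absolute convergence guaranteed by the bounds \eqref{Eq_Pseudo-Hyperbolicic Exponential Bounds}.

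Note that neither assertion requires $I+h$ to be a homeomorphism; that is deferred to the later lemmas of the proof.
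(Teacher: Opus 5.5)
Your proof is correct and takes the same route as the paper. The bound follows from $\|h\|_\infty=\|W_T(\phi\circ(I+h))\|_\infty\le\|W_T\|\,\|\phi\|_\infty\le C\frac{1+\beta}{1-\beta}\delta\le\e$, and the conjugacy from applying $\Psi_T$ to the fixed-point identity \eqref{eq_h} and rearranging using linearity of $T$; the paper argues the same way for $h'$ and for $H=I+W_g(T-g)$ in the topological stability part, and your telescoping check of $\Psi_T\circ W_T=I$ is correct but already supplied by Lemma~\ref{Lem_Right Inverse Psi}.
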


\paragraph{}

In view of relation \eqref{Eq_Conjugation Lipschitz Stability}, it remains to show that the map $H=I+h:\mc{B}\to\mc{B}$ is a homeomorphism. By Lemma \ref{Lem_T+phi Uniform Homeomorphism}, the map $T+\phi$ is a uniform homeomorphism. Hence, Lemma \ref{Lem_Right Inverse Psi} applies with $g=T+\phi$. Set
$$
h'=W_{T+\phi}(-\phi)\in U_b\lc\mc{B}\rc
\quad\text{and}\quad
H'=I+h'.
$$
Then
$$
\Psi_{T+\phi}(h')=-\phi
\quad\text{and}\quad
\ldav h'\rdav_{\oo}\le\e.
$$
Indeed, the first identity follows from Lemma \ref{Lem_Right Inverse Psi}, whereas the second inequality follows from
$$
\ldav h'\rdav_{\oo}
\le
\ldav W_{T+\phi}\rdav
\cdot
\ldav\phi\rdav_{\oo}
\underset{\eqref{Eq_W Bound}}{\le}
C\frac{1+\beta}{1-\beta}
\delta
\underset{\eqref{Eq_epsilon-delta}}{\le}
\e.
$$

\paragraph{}

The next lemma shows that $H'$ is the inverse of $H$.

\begin{lemma}\label{Lem_Conjugacy Inverse}
The maps $H=I+h$ and $H'=I+h'$ are inverses of each other. Equivalently,
$$
H\circ H'
=
H'\circ H
=
I.
$$
\end{lemma}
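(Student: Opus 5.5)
The plan is to derive both identities from uniqueness statements for fixed points and solutions of the homological equation, using the defining relations of $h$ and $h'$. By Lemma~\ref{Lem_Fix Point of V}, $H\circ T=(T+\phi)\circ H$. By construction, $\Psi_{T+\phi}(h')=-\phi$, which reads $h'\circ(T+\phi)-T\circ h'=-\phi$, i.e.
\[
H'\circ(T+\phi)=T\circ H'.
\]
Composing these two conjugacy relations gives
\[
(H'\circ H)\circ T=H'\circ(T+\phi)\circ H=T\circ(H'\circ H)
\]
and
\[
(H\circ H')\circ(T+\phi)=H\circ T\circ H'=(T+\phi)\circ(H\circ H').
\]

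\textbf{Step 1: $H'\circ H=I$.} Write $H'\circ H=I+k$ with $k=h+h'\circ(I+h)\in U_b(\mc B)$; it is bounded and uniformly continuous because $h'$ is uniformly continuous. The relation $(I+k)\circ T=T\circ(I+k)$ says $\Psi_T(k)=k\circ T-T\circ k=0$. So I need injectivity of $\Psi_T$ on $U_b(\mc B)$. Given $k$ with $k\circ T=T\circ k$, iterate to get $k=T^{n}\circ k\circ T^{-n}$ for all $n\in\Z$. Then
\[
Pk(x)=T^{-n}P'\cdots
\]
does not work directly, because $P$ need not commute with $T$. Instead, split $k=Pk+Qk$ and use the formula for $W_T$: one checks that $W_T\Psi_T(f)=f$ for every $f\in U_b$. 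This is a telescoping computation: $\sum_{k\ge0}T^kP(f\circ T^{-k}-T f\circ T^{-k-1})$ telescopes, and the boundary terms vanish because $\|T^kP\|\le C\beta^k$ and $\|T^{-k}Q\|\le C\beta^k$ while $f$ is bounded. The terms that remain sum to $Pf+Qf=f$. Hence $\Psi_T$ is injective, and $k=0$.

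\textbf{Step 2: $H\circ H'=I$.} Similarly write $H\circ H'=I+k'$ with $k'=h'+h\circ(I+h')\in U_b(\mc B)$. Its relation yields $k'\circ g-T\circ k'=\phi\circ(I+k')-\phi$ with $g=T+\phi$. Rather than a linear uniqueness statement, I would use a contraction argument: the map $F\mapsto W_g\bigl(\phi\circ(I+F)-\phi\bigr)$ is an $\e$-contraction on $U_b$. Both $k'$ and $0$ solve the equation, provided the identity $W_g\Psi_g=I$ holds, which follows by the same telescoping as in Step 1. Hence both are fixed points of this contraction, and uniqueness gives $k'=0$.

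The main obstacle is establishing that $W_g$ is also a \emph{left} inverse of $\Psi_g$. Lemma~\ref{Lem_Right Inverse Psi} only asserts a right inverse. I expect the telescoping to work because $g$ is a homeomorphism, with the boundary terms $T^{n}Pf(g^{-n}x)$ and $T^{-n}Qf(g^{n}x)$ tending to $0$ uniformly. Verifying the index bookkeeping is the step to check carefully.
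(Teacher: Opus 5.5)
Both of your steps rest on the claim that $W_g$ is also a \emph{left} inverse of $\Psi_g$. That claim is false in exactly the situation this lemma is about. The telescoping you sketch needs $T^kP\,T=T^{k+1}P$, i.e.\ $PT=TP$, which is the commutation you yourself noted is unavailable one line earlier. Careful bookkeeping cannot fix this, because $\Psi_g$ is not injective. If $Tv=v$ with $v\neq 0$, the constant map $k\equiv v$ lies in $U_b(\mc B)$ and $\Psi_g(k)=v-Tv=0$ for every $g$. By the remark following Proposition~\ref{Prop_Pseudo-Hyperbolicity Spectral Characterization}, every pseudo-hyperbolic $T$ that is not hyperbolic satisfies $\SP^1\subseteq\sigma_{pt}(T)$, so $1$ is always an eigenvalue in the non-hyperbolic case. (Conversely, $PT=TP$ would rule out unimodular eigenvalues and force $T$ to be hyperbolic.) As a result, the conjugacy relations you derive cannot by themselves give $H'\circ H=I$: translations $x\mapsto x+v$ by fixed vectors of $T$ also commute with $T$. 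Likewise, in Step~2 the equation $\Psi_g(F)=\phi\circ(I+F)-\phi$ has no uniqueness; for $\phi=0$, every constant $F\equiv v$ with $v\in\ker(T-I)$ solves it.

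The missing idea is to use that $h$ and $h'$ are themselves images under $W_T$ and $W_{T+\phi}$, and to transport the series \eqref{Eq_W Right Inverse} along the conjugacy. Set $g=T+\phi$. The relation $H\circ T=g\circ H$, together with bijectivity of $T$ and $g$ (Lemma~\ref{Lem_T+phi Uniform Homeomorphism}), gives $g^{n}\circ H=H\circ T^{n}$ for all $n\in\Z$. Substituting into \eqref{Eq_W Right Inverse} yields $W_g(f)\circ H=W_T(f\circ H)$ pointwise. Hence, by \eqref{eq_h}, $h'\circ H=-W_T(\phi\circ H)=-h$, so $H'\circ H=I+h+h'\circ H=I$ directly, with no injectivity needed. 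Similarly, $H'\circ g=T\circ H'$ gives $h\circ H'=W_T(\phi\circ H)\circ H'=W_g(\phi\circ H\circ H')$. So $k'=H\circ H'-I=h'+h\circ H'$ satisfies
\[
k'=W_g\bigl(\phi\circ(I+k')-\phi\bigr),
\qquad
\|k'\|_\infty\le\|W_g\|\,\delta\,\|k'\|_\infty\le\e\,\|k'\|_\infty ,
\]
and $\e<1$ forces $k'=0$. Your fixed-point equation and contraction estimate in Step~2 are therefore the right ones. What justifies them is this explicit series computation, not a left-inverse property of $\Psi_g$.
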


\paragraph{} 
By Lemmas \ref{Lem_Fix Point of V} and \ref{Lem_Conjugacy Inverse}, the operators $T$ and $T+\phi$ are topologically conjugate by the uniform homeomorphism $H$, which satisfies $ \ldav H-I\rdav_{\oo}\le\e$. 
Since the choice of $\e>0$ and of $\phi\in\mf{Lip}_{\delta}\lc\mc{B}\rc$, where $\delta$ is given by \eqref{Eq_epsilon-delta}, was arbitrary, it follows that $T$ is strongly Lipschitz structurally stable.

\paragraph{Proof of topological stability.}

Fix $\e>0$ and define
\begin{equation}\label{Eq_Theor PH-LS-S Topological Stability delta}
\delta
=
\frac{1-\beta}{(1+\beta)C}\,\e.
\end{equation}
Let $g:\mc{B}\to\mc{B}$ be a homeomorphism satisfying $ \ldav T-g\rdav_{\oo}\le\delta$. 

\smallskip 

Denote by
$$
C_b\lc\mc{B}\rc
=
\lfp
f:\mc{B}\to\mc{B}
:
f \text{ is continuous and bounded}
\rfp
$$
the Banach space of continuous and bounded maps on $\mc{B}$.
The proof of Lemma \ref{Lem_Right Inverse Psi} carries over verbatim to the space
$C_b\lc\mc{B}\rc$, yielding bounded operators
$$
\Psi_g,W_g:C_b\lc\mc{B}\rc\longrightarrow C_b\lc\mc{B}\rc,
$$
defined by \eqref{Eq_Lemma RI Psi Map} and \eqref{Eq_W Right Inverse}, respectively, such that
$$
\Psi_g\circ W_g=I
\quad\text{and}\quad
\ldav W_g\rdav
\le
C\frac{1+\beta}{1-\beta}.
$$

Set
$$
H
:=
I+W_g(T-g).
$$
The map $H$ is well defined since $T-g\in C_b\lc\mc{B}\rc$. Moreover,
$$
\ldav H-I\rdav_{\oo} 
\le
\ldav W_g\rdav
\,
\ldav T-g\rdav_{\oo}
\underset{ \eqref{Eq_W Bound}, \eqref{Eq_Theor PH-LS-S Topological Stability delta}}{ \le } 
\e, 
$$
and 
\begin{equation*}
T-g 
=
\Psi_g(H-I) 
\underset{\eqref{Eq_Lemma RI Psi Map}}{=}
H\circ g-g-T\circ H+T,
\end{equation*}
which implies that $ H\circ g
=
T\circ H$. 
Hence, $T$ is topologically stable.

This completes the proof of Point~\emph{(1)} of Theorem \ref{Theor_Pseudo-Hyperbolicity Lipschitz Stability Shadowing}. 
\end{proof}

\paragraph{}

For the rest of this section, given an operator $T \in \mc{GL}\lc \mc{B} \rc$, denote by $\mc{B}^{*}$ the dual space of $\mc{B}$ and by $T^{*}:\mc{B}^{*}\to\mc{B}^{*}$ the adjoint operator of $T$. It is well known (see, for instance, \cite[Theorem 1.10 and p.~7]{Aiena_Fredholm_Local_Spectral_Theory}) that
\begin{equation}\label{Eq_Dual Spectra}
    \sigma_{sur}\lc T \rc
    =
    \sigma_{ap}\lc T^{*} \rc .
\end{equation}

\begin{proof}[Proof of Point (2) of Theorem \ref{Theor_Pseudo-Hyperbolicity Lipschitz Stability Shadowing}]

Assume that $T$ is strongly Lipschitz structurally stable and suppose, for a contradiction, that $T$ does not have the shadowing property. By Theorem \ref{Theor_Equivalence Shadowing and Spectrum}, there exists $\lambda\in\SP^{1}$ such that $ \lambda\in\sigma_{\mathrm{sur}}(T)
=
\sigma_{\mathrm{ap}}\lc T^{*} \rc $. 

Set $\e=1$, and let $\delta>0$ be the constant provided by the definition of strong Lipschitz structural stability. Choose $\alpha>0$ and $N\in\N$ such that
\begin{equation}\label{Eq_Theor PH-LS-S alpha and N}
2\alpha<\delta
\qquad\text{and}\qquad
\alpha N>8.
\end{equation}
Define
\begin{equation}\label{Eq_Theor PH-LS-S K and L}
L
:=
\sum_{k=0}^{N-1}\ldav T^k\rdav
\qquad\text{and}\qquad
K
:=
\max_{0\le n\le N}\ldav T^n\rdav,
\end{equation}
and choose $\eta>0$ so that
\begin{equation}\label{Eq_Theor PH-LS-S eta definition}
8\eta L<\frac12
\qquad\text{and}\qquad
\eta(8K+1)<\frac{\alpha}{4}.
\end{equation}
Since $\lambda \in \sigma_{ap}\lc T^{*} \rc$, there exists a functional $f\in\mc{B}^*$ satisfying
\begin{equation}\label{Eq_Theor PH-LS-S Functional f}
\|f\|=1
\qquad\text{and}\qquad
\|T^*f-\lambda f\|<\eta.
\end{equation}

\paragraph{}

The remainder of the proof is devoted to deriving a contradiction from \eqref{Eq_Theor PH-LS-S Functional f}. Choose $x_0\in\mc{B}$ such that
\begin{equation}\label{Eq_Theor PH-LS-S x0}
f(x_0)=1
\qquad\text{and}\qquad
\ldav x_0\rdav_{\mc{B}}<2,
\end{equation}
and define the map $\phi:\mc{B}\to\mc{B}$ by
\begin{equation}\label{Eq_Theor PS-LS-S Lipschitz Map}
\phi(x)
:=
\alpha\lambda A\lc f(x) \rc \,x_0,
\qquad
x\in\mc{B},
\end{equation}
where $A:\mathbb C\to\mathbb C$ is the bounded Lipschitz map
\[
A(z)
=
\begin{cases}
\dfrac{z}{2},
&
|z|\le2,
\\[6pt]
\dfrac{z}{|z|},
&
|z|>2.
\end{cases}
\]

To verify that $\phi\in\mf{Lip}_{\delta}(\mc{B})$, observe that a standard argument shows that
$|A(z)|\le1$ for all $z\in\mathbb{C}$ and that
$\operatorname{Lip}(A)\le1$. Consequently,
\[
\ldav\phi\rdav_{\infty} 
\le 
2\alpha
<
\delta,
\]
and
\[
\operatorname{Lip}(\phi)
\le
2\alpha
<
\delta.
\]
Hence,
$\phi\in\mf{Lip}_{\delta}\lc\mc{B}\rc$.

Since $T$ is strongly Lipschitz structurally stable, there exists a homeomorphism
$H:\mc{B}\to\mc{B}$ satisfying
\begin{equation}\label{Eq_Theor PS-LS-S Conjugation Point 2}
H\circ T
=
(T+\phi)\circ H
\qquad\text{and}\qquad
\ldav H-I\rdav_{\infty}\le1.
\end{equation}

\smallskip

The contradiction follows upon establishing that
\begin{equation}\label{Eq_Theor PH-LS-S Contradiction}
\lav f\lc H(T^{N}z_{0})\rc\rav>6
\quad\text{and}\quad
\frac52
<
\lav f\lc H(T^{n}z_{0})\rc\rav
<
\frac{11}{2},
\qquad
0\le n \le  N,
\end{equation}
where $z_{0}=4x_{0}$.

To this end, notice that, for every $0\le n\le N$,
\begin{align*}
\bigl|f(H(T^{n}z_{0}))-4\lambda^{n}\bigr| 
\underset{\eqref{Eq_Theor PH-LS-S K and L},
\eqref{Eq_Theor PH-LS-S Functional f}}{\le}
1+8\eta L
\underset{\eqref{Eq_Theor PH-LS-S eta definition}}{<}
\frac32.
\end{align*}
Since $|\lambda|=1$, the right-hand side inequality in
\eqref{Eq_Theor PH-LS-S Contradiction}
follows immediately from the triangle inequality.

The left-hand side inequality in
\eqref{Eq_Theor PH-LS-S Contradiction}
is a consequence of
\begin{equation}\label{Eq_Theor PH-LS-S Iteration}
\lav f(H(Tx))\rav
\underset{ \eqref{Eq_Theor PH-LS-S Functional f}, \eqref{Eq_Theor PS-LS-S Lipschitz Map} , \eqref{Eq_Theor PS-LS-S Conjugation Point 2} }{\ge} 
\lav f(H(x))\rav
+
\alpha
-
\eta
\bigl(
1+\ldav x\rdav_{\mc B}
\bigr),
\end{equation}
whenever
$
\lav f(H(x))\rav\ge2.
$
Indeed, the right-hand side inequality in
\eqref{Eq_Theor PH-LS-S Contradiction}
implies that
\[
\lav f(H(T^{n}z_{0}))\rav>2,
\qquad
0\le n\le N-1,
\]
while
\[
\ldav T^{n}z_{0}\rdav_{\mc B}
\underset{\eqref{Eq_Theor PH-LS-S K and L}}{\le}
8K,
\qquad
0\le n\le N-1.
\]
Applying
\eqref{Eq_Theor PH-LS-S Iteration}
with
$x=T^{n}z_{0}$,
one obtains
\[
\lav f(H(T^{n+1}z_{0}))\rav
\underset{\eqref{Eq_Theor PH-LS-S eta definition}}{\ge}
\lav f(H(T^{n}z_{0}))\rav
+\frac{3\alpha}{4},
\qquad
0\le n\le N-1.
\]
Iterating the previous inequality yields
\[
\lav f(H(T^{N}z_{0}))\rav
\ge
\lav f(H(z_{0}))\rav
+\frac{3\alpha N}{4}
\underset{\eqref{Eq_Theor PH-LS-S alpha and N}}{>}
6,
\]
thus establishing the left-hand side inequality in
\eqref{Eq_Theor PH-LS-S Contradiction}.

The inequalities in
\eqref{Eq_Theor PH-LS-S Contradiction}
contradict
\eqref{Eq_Theor PH-LS-S Functional f}.
Therefore,
$\SP^{1}\cap\sigma_{\mathrm{ap}}(T^{*})=\emptyset$,
and the proof is complete. 
\end{proof}

\paragraph{} 

For the remainder of this section, the positive shadowing property will simply be referred to as the shadowing property.

Theorem \ref{Theor_Equivalence Shadowing and Spectrum} is an immediate consequence of the following proposition. To state it, let $\mathbb{L}$ denote either $\Z$ or $\N$, and consider the Banach space
$$ 
\ell^{\oo}_{\mathbb{L}}(\mc{B})
=
\left\{
(x_n)_{n\in\mathbb{L}}
\in
\mc{B}^{\mathbb{L}}
:
\sup_{n\in\mathbb{L}}
\|x_n\|_{\mc{B}}
<
+\infty
\right\}.
$$ 
Thus, if $\mathbb{L}=\Z$, then $\ell^{\oo}_{\mathbb{L}}(\mc{B})$ is the space of bounded bi-infinite sequences in $\mc{B}$, whereas if $\mathbb{L}=\N$, it is the space of bounded one-sided sequences.

\begin{proposition}\label{Prop_Shadowing, Spectra, and Surjectivity}
Let $\mc B$ be a Banach space. Assume that either
\[
\begin{aligned}
&T\in\mc{GL}\lc\mc{B}\rc
\text{ and }
\mathbb{L}=\Z,
&&\text{or}&&
T\in\mc{L}\lc\mc{B}\rc
\text{ and }
\mathbb{L}=\N.
\end{aligned}
\]
Define the operator
\begin{equation*}\label{Eq_Def H operator}
\begin{aligned}
H_{T}:\,
&\ell^{\oo}_{\mathbb{L}}\lc \mc{B} \rc
\longrightarrow
\ell^{\oo}_{\mathbb{L}}\lc \mc{B} \rc,
\\
&H_{T}\lc (x_{n})_{n\in \mathbb{L}} \rc
=
\lc (x_{n+1}-Tx_{n}) \rc_{n\in \mathbb{L}}.
\end{aligned}
\end{equation*}
Then the following assertions are equivalent:
\begin{enumerate}
\item The operator $T$ has the shadowing property.

\item The operator $H_{T}$ is surjective.

\item The operator $T-\lambda I$ is surjective for every $\lambda\in\SP^{1}$.
\end{enumerate}
\end{proposition}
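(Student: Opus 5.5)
I would prove the cycle (1)$\Rightarrow$(2)$\Rightarrow$(3)$\Rightarrow$(1).

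\emph{(1)$\Rightarrow$(2).} This is the standard linear scaling argument. Fix $\e=1$ and let $\delta>0$ be given by the shadowing property. Take any $(y_n)\in\ell^\oo_{\mathbb L}(\mc B)$ with $y\neq0$, and set $r=\delta/\|y\|_\oo$. Define $x_n$ recursively by $x_{n+1}=Tx_n+ry_n$, starting from $x_0=0$. When $\mathbb L=\Z$ we also need backward iterates, and these are available from $x_{n}=T^{-1}(x_{n+1}-ry_n)$ because $T$ is invertible. The sequence $(x_n)$ is a $\delta$-pseudo-orbit, so there is $x$ with $\|x_n-T^nx\|\le1$ for all $n$. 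Then $z_n=(x_n-T^nx)/r$ is bounded and satisfies $H_T(z)=y$. Hence $H_T$ is surjective.

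\emph{(2)$\Rightarrow$(3).} Let $\lambda\in\SP^1$ and $b\in\mc B$. Apply surjectivity to $y_n=\lambda^{n+1}b$, obtaining a bounded sequence $(x_n)$ with $x_{n+1}-Tx_n=\lambda^{n+1}b$. Set $u_n=\lambda^{-n}x_n$; this is bounded and satisfies
\[
\lambda u_{n+1}-Tu_n=\lambda b .
\]
To produce a stationary solution, I would average the $u_n$ along a Banach limit. Pick $\varphi\in\mc B^*$. The scalar sequences $\varphi(u_n)$ and $\varphi(Tu_n)$ are bounded, so applying a Banach limit $\mathrm{LIM}$ in $n$ gives
\[
\lambda\,\mathrm{LIM}\varphi(u_n)-\mathrm{LIM}\,(T^*\varphi)(u_n)=\lambda\varphi(b).
\]
Here we used that $\mathrm{LIM}$ is shift invariant. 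Define $u\in\mc B^{**}$ by $u(\varphi)=\mathrm{LIM}\,\varphi(u_n)$. The identity says $(\lambda-T^{**})u=\lambda\hat b$, so $\lambda-T^{**}$ is surjective onto $\hat{\mc B}$ only within the bidual, which does not yet give a preimage in $\mc B$.

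To avoid the bidual, I would instead argue by duality. By \eqref{Eq_Dual Spectra}, it suffices to show that $T^*-\lambda$ is bounded below. Suppose not, and take unit functionals $f_k$ with $\|T^*f_k-\lambda f_k\|\to0$. Since $H_T$ is a surjection between Banach spaces, the open mapping theorem yields a constant $M$ such that every $y$ has a preimage of norm at most $M\|y\|$. Choose $b$ with $\|b\|\le 2$ and $f_k(b)=1$, and use the sequence $y_n=\lambda^{n+1}b$ only for $0\le n<N$, with $y_n=0$ elsewhere. Writing $g_k=\lambda^{-1}T^*f_k$, a telescoping computation on a finite window gives
\[
\sum_{n=0}^{N-1}\lambda^{-n-1}f_k(x_{n+1}-Tx_n)=N .
\]
The left side equals a boundary term of size at most $2M\|y\|_\oo$ plus an error $N\cdot M\|y\|_\oo\cdot\|T^*f_k-\lambda f_k\|$. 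Let $k\to\infty$ first and then $N\to\infty$ to get a contradiction. The bookkeeping that the preimage $x$ is bounded by $M\|y\|_\oo$, independently of $N$, is the main obstacle, and it is exactly what the open mapping theorem supplies.

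\emph{(3)$\Rightarrow$(1).} The surjective spectrum is closed, so by compactness of $\SP^1$ the set $\sigma_{sur}(T)$ misses an annulus $r<|\lambda|<1/r$. By \eqref{Eq_Dual Spectra}, $T^*-\lambda$ is then bounded below on this annulus, uniformly on compact subsets. I would try to build right inverses $R_\lambda$ of $T-\lambda$ on the annulus, but bounded linear right inverses need not exist. So instead I would directly construct a bounded solution operator for the equation $x_{n+1}-Tx_n=y_n$, working with bounded nonlinear selections. By Bartle--Graves (or simply the open mapping theorem), for each $\lambda$ on the annulus there is $C_\lambda$ such that every $b$ has a preimage under $T-\lambda$ of norm at most $C_\lambda\|b\|$.

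To conclude, I would prove (3)$\Rightarrow$(2) by showing that $H_T$ has dense range and closed range. Closed range follows from $H_T^*$ being bounded below, which should follow from $T^*-\lambda$ being uniformly bounded below on $\SP^1$, via Fourier analysis of the dual sequence space. Then I would prove (2)$\Rightarrow$(1) by the same open-mapping estimate: a $\delta$-pseudo-orbit $(x_n)$ gives $y=H_T(x)$ with $\|y\|_\oo\le\delta$, and a preimage $z$ of $y$ with $\|z\|_\oo\le M\delta$. Then $x-z$ is a genuine orbit that $\e$-shadows $(x_n)$ once $\delta=\e/M$. The Fourier/duality step showing that $H_T^*$ is bounded below is where I expect the real difficulty.
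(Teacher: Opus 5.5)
Your steps (1)$\Rightarrow$(2) and (2)$\Rightarrow$(3) are correct. The scaling argument is the standard one; the paper simply cites the equivalence of (1) and (2).

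Your finite-window telescoping for (2)$\Rightarrow$(3) works: since $\|x\|_\infty\le 2M$ independently of $N$ and $k$, you get $N\le 4M+2MN\|T^*f_k-\lambda f_k\|$, which is impossible for a fixed $N>4M$ once $k$ is large. This is a valid alternative to the paper's argument. The paper uses Ces\`aro averages $z_k=\frac1k\sum_{m=1}^k x_m$ for a constant right-hand side, the closure step in the proof of the open mapping theorem, and the rotation $T\mapsto\lambda^{-1}T$. Your version is essentially the dual form of that averaging and handles every $\lambda\in\SP^1$ directly.

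Your closing (2)$\Rightarrow$(1) estimate is also fine, provided you define $y_n:=x_{n+1}-Tx_n$ coordinatewise rather than writing $y=H_T(x)$. A pseudo-orbit need not be bounded, so $H_T(x)$ is not defined in general.

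The gap is (3)$\Rightarrow$(2), the substantive direction, and you do not prove it. The claim that $H_T^*$ is bounded below ``via Fourier analysis of the dual sequence space'' is not an argument, and it does not work as stated, for two reasons.

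\emph{First}, elements of $(\ell^\infty_{\mathbb L}(\mathcal B))^*$ are in general not sequences of functionals, so there is no Fourier transform to use.

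\emph{Second}, even on $\ell^1(\mathcal B^*)$ the idea fails. For $\Phi=(\phi_n)_n$ there, you have $(H_T^*\Phi)_n=\phi_{n-1}-T^*\phi_n$ and $\widehat{H_T^*\Phi}(\theta)=(e^{i\theta}-T^*)\widehat\Phi(\theta)$. The uniform bound $\|(e^{i\theta}-T^*)v\|\ge c\|v\|$ then only gives $\|\widehat{H_T^*\Phi}\|_{L^\infty}\ge c\|\widehat\Phi\|_{L^\infty}$, which does not control $\ell^1$ norms.

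The missing idea is the paper's decomposition. Write $H_T=S-\widehat T$, with $S$ the shift and $\widehat T$ acting coordinatewise. These operators commute, so by the Davis--Rosenthal theorem, $0\in\sigma_{ap}(S^*-\widehat T^*)$ forces $\sigma_{ap}(S^*)\cap\sigma_{ap}(\widehat T^*)\neq\emptyset$. Two spectral identities then finish the argument:
\begin{itemize}
\item $\sigma_{ap}(\widehat T^*)=\sigma_{sur}(\widehat T)=\sigma_{sur}(T)$. The last equality is where the open mapping theorem supplies a uniform constant for solving $(T-\lambda I)x_n=y_n$ in every coordinate.
\item $\sigma_{ap}(S^*)=\sigma_{sur}(S)=\SP^1$.
\end{itemize}
Together these contradict (3). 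Without this decomposition, or some equivalent spectral tool, your cycle does not close.
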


\begin{remark}\label{Rem_Shadowing and Surjectivity} 
The equivalence between {\rm(1)} and {\rm(2)} in Proposition
\ref{Prop_Shadowing, Spectra, and Surjectivity}
has appeared previously in the literature; see
\cite[Lemma~10]{Bernardes-Expansivity_and_Shadowing_in_Linear_Dynamics},
\cite[Proposition~3]{Cirilo_Gollobit_Pujals-Dynamics_of_Generalized_Hyperbolic_Linear_Operators},
and \cite{Pilyugin-Shadowing_in_Dynamical_Systems}. 
The equivalence between {\rm(1)} and {\rm(3)} was recently established by
Dragičević and Pituk
\cite{Dragicevic_Pituk-Duality_between_Shadowing_and_Uniform_Expansivity_in_Linear_Dynamics}.
For completeness, an alternative and more elementary proof of the latter equivalence will be obtained by proving the equivalence between {\rm(2)} and {\rm(3)}.
\end{remark}

The proof of Proposition \ref{Prop_Shadowing, Spectra, and Surjectivity} relies on the following result due to Davis \cite{Davis-Rosenthal_Solving_Linear_Operator_Equations}.

\begin{lemma}\label{Lem_Spectrum of the Difference}
\cite[Theorem 2]{Davis-Rosenthal_Solving_Linear_Operator_Equations}
Let $\mc{B}$ be a complex Banach space, and let $T_{1},T_{2}:\mc{B}\to\mc{B}$ be commuting bounded linear operators. Then
\begin{equation*}
    \sigma_{ap}\lc T_{1}-T_{2}\rc
    \sub 
    \left\{
    \alpha-\beta
    :
    \alpha\in\sigma_{ap}\lc T_{1}\rc,
    \;
    \beta\in\sigma_{ap}\lc T_{2}\rc
    \right\}.
\end{equation*}
In particular, if $ 0\in\sigma_{ap}\lc T_{1}-T_{2}\rc$, then $ \sigma_{ap}\lc T_{1}\rc
\cap
\sigma_{ap}\lc T_{2}\rc
\neq
\emptyset$. 
\end{lemma}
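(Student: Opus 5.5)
The plan is to reduce to the case $0\in\sigma_{ap}(T_1-T_2)$ and then turn approximate eigenvectors into genuine ones by passing to a Berberian-type enlargement of $\mc B$.

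\emph{Reduction.} Suppose $\mu\in\sigma_{ap}(T_1-T_2)$. Then $0\in\sigma_{ap}\bigl((T_1-\mu I)-T_2\bigr)$, and $T_1-\mu I$ still commutes with $T_2$. Also $\sigma_{ap}(T_1-\mu I)=\sigma_{ap}(T_1)-\mu$. So it suffices to prove the ``in particular'' statement: if $0\in\sigma_{ap}(T_1-T_2)$, there is $\alpha\in\sigma_{ap}(T_1)\cap\sigma_{ap}(T_2)$. Applying this with $T_1-\mu I$ in place of $T_1$ gives $\alpha-\mu=\beta$ for some $\alpha\in\sigma_{ap}(T_1)$ and $\beta\in\sigma_{ap}(T_2)$, that is, $\mu=\alpha-\beta$.

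\emph{Enlargement.} Let $\mc B^\circ=\ell^\infty_{\N}(\mc B)/c_0(\mc B)$ with the quotient norm $\|[(x_n)]\|=\limsup_n\|x_n\|$. For $S\in\mc L(\mc B)$, define $S^\circ[(x_n)]=[(Sx_n)]$. The map $S\mapsto S^\circ$ is linear, multiplicative and contractive, so $T_1^\circ$ and $T_2^\circ$ commute. The key fact to check is
\[
\sigma_{ap}(S^\circ)=\sigma_{ap}(S),\qquad\text{and every }\lambda\in\sigma_{ap}(S)\text{ is an eigenvalue of }S^\circ .
\]
The two inclusions are as follows.
\begin{itemize}
\item If $S-\lambda I$ is bounded below by $c>0$, then $\|(S-\lambda I)x_n\|\ge c\|x_n\|$ for every $n$. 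Taking $\limsup$ shows that $S^\circ-\lambda I$ is bounded below by $c$, so $\sigma_{ap}(S^\circ)\sub\sigma_{ap}(S)$.
\item If $(x_n)$ is a sequence of unit vectors with $(S-\lambda I)x_n\to0$, then $[(x_n)]$ is a unit eigenvector of $S^\circ$ for $\lambda$. This gives the reverse inclusion.
\end{itemize}

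\emph{Main step.} Since $0\in\sigma_{ap}(T_1-T_2)$, the kernel $K=\ker(T_1^\circ-T_2^\circ)$ is a nonzero closed subspace of $\mc B^\circ$. By commutativity, $K$ is invariant under $T_1^\circ$, and $T_1^\circ|_K=T_2^\circ|_K$. The operator $R=T_1^\circ|_K$ acts on a nonzero complex Banach space, so $\sigma(R)$ is nonempty and compact. Pick $\alpha\in\partial\sigma(R)$. Boundary points of the spectrum lie in the approximate point spectrum: if $R-\alpha I$ were bounded below, then invertibility of $R-\lambda I$ for nearby $\lambda\in\rho(R)$, together with the blow-up of $\|(R-\lambda I)^{-1}\|$ near $\sigma(R)$, gives a contradiction. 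Hence there are unit vectors $u_k\in K$ with $(T_1^\circ-\alpha I)u_k\to0$. Since $T_2^\circ u_k=T_1^\circ u_k$, the same vectors give $(T_2^\circ-\alpha I)u_k\to0$. Therefore $\alpha\in\sigma_{ap}(T_1^\circ)\cap\sigma_{ap}(T_2^\circ)=\sigma_{ap}(T_1)\cap\sigma_{ap}(T_2)$.

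\emph{Main obstacle.} The only delicate point is the identity $\sigma_{ap}(S^\circ)=\sigma_{ap}(S)$. One has to check carefully that the $\limsup$ norm makes the lower bound pass to the quotient, and that $S^\circ$ is well defined, i.e.\ $S$ maps $c_0(\mc B)$ into itself. The remaining steps are standard spectral theory.
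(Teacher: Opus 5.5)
Your proof is correct, but it does not follow the paper's route, because the paper gives no proof of this lemma: it simply quotes it from Davis--Rosenthal (Theorem~2 there). Your proposal therefore supplies, in self-contained form, what the paper leaves to the literature.

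Each step of your argument holds up:
\begin{itemize}
\item The translation by $\mu$ correctly reduces the inclusion to the case $0\in\sigma_{ap}(T_1-T_2)$, and there is no circularity, since you prove the ``in particular'' clause directly.
\item The quotient norm on $\ell^\infty_{\mathbb{N}}(\mathcal B)/c_0(\mathcal B)$ is indeed $\limsup_n\|x_n\|$, which gives $\sigma_{ap}(S^\circ)=\sigma_{ap}(S)$ and turns approximate eigenvalues into eigenvalues.
\item Commutativity is used exactly once, and at the right place: it makes $K=\ker(T_1^\circ-T_2^\circ)$ invariant under $T_1^\circ$.
\item The step $\partial\sigma(R)\subseteq\sigma_{ap}(R)$ is justified by your sketch. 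If $R-\alpha I$ were bounded below by $c$, then $\|(R-\lambda I)^{-1}\|\le 2/c$ for $\lambda\in\rho(R)$ close to $\alpha$. This contradicts $\|(R-\lambda I)^{-1}\|\ge \operatorname{dist}(\lambda,\sigma(R))^{-1}$.
\end{itemize}

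Compared with the bare citation, your argument costs about a page but makes the paper independent of the reference. That is useful here, because the lemma is applied to $S^*$ and $\widehat T^*$ on the large dual space $\ell^\infty_{\mathbb{L}}(\mathcal B)^*$, and your proof shows that nothing beyond a complex Banach space is needed.

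It also proves slightly more than the statement. The same unit vectors $u_k\in K$ are simultaneous approximate eigenvectors of $T_1^\circ$ and $T_2^\circ$. After a diagonal choice of representatives back in $\mathcal B$, this shows that every $\mu\in\sigma_{ap}(T_1-T_2)$ can be written as $\mu=\alpha-\beta$ with $(\alpha,\beta)$ in the joint approximate point spectrum of $(T_1,T_2)$. The statement only asks for $\alpha\in\sigma_{ap}(T_1)$ and $\beta\in\sigma_{ap}(T_2)$ separately.
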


\bigskip

\begin{proof}[Proof of Proposition \ref{Prop_Shadowing, Spectra, and Surjectivity}]

Choose $\mathbb{L}\in\{\Z,\N\}$, and let
$T:\mc{B}\to\mc{B}$ and
$H_T:\ell^\infty_{\mathbb{L}}(\mc{B})\to\ell^\infty_{\mathbb{L}}(\mc{B})$
be as in the statement of the proposition.
In view of Remark \ref{Rem_Shadowing and Surjectivity},
it remains to establish the equivalence between {\rm(2)} and {\rm(3)}. 

\paragraph{} 

It is first shown that surjectivity of $H_T$ implies surjectivity of
$T-\lambda I$ for every $\lambda\in\SP^{1}$.
To this end, it suffices to establish the following claim:
\begin{equation}\label{Eq_Claim H Surjective implies T-I Surjective}
\text{If } H_{T} \text{ is surjective, then } T-I \text{ is surjective.}
\end{equation}
Indeed, fix $\lambda\in\SP^{1}$. Since $T$ has the shadowing property if and only if $\lambda^{-1}T$ has the shadowing property, the equivalence of {\rm(1)} and {\rm(2)} in Proposition
\ref{Prop_Shadowing, Spectra, and Surjectivity}
implies that $H_T$ is surjective if and only if $H_{\lambda^{-1}T}$ is surjective.
Applying Claim
\eqref{Eq_Claim H Surjective implies T-I Surjective}
to $\lambda^{-1}T$, one concludes that
$\lambda^{-1}T-I$ is surjective, or equivalently, that
$T-\lambda I$ is surjective.

\paragraph{}

To establish Claim
\eqref{Eq_Claim H Surjective implies T-I Surjective},
denote by
$B\lc\bm0,r\rc$ and $B_{\oo}\lc\bm0,r\rc$
the open balls of radius $r>0$ centered at $\bm0$ in the spaces
$\mc{B}$ and $\ell^{\oo}_{\mathbb{L}}\lc\mc{B}\rc$, respectively.
Since $H_{T}$ is surjective, the Open Mapping Theorem
\cite[Theorem 2.6]{Brezis_Functional_Analysis} 
guarantees the existence of $\rho>0$ such that
\begin{equation}\label{Eq_H Operator Open}
B_{\oo}\lc\bm0,\rho\rc
\sub
H_{T}\lc B_{\oo}\lc\bm0,1\rc\rc.
\end{equation} 
By \cite[p. 36, Proof of Theorem 2.6, Step 2]{Brezis_Functional_Analysis}, it suffices to establish 
\begin{equation}\label{Eq_I-T Surjective}
B\lc\bm0,\rho\rc
\sub
\overline{\lc I-T\rc\lc B\lc\bm0,1\rc\rc}.
\end{equation}
Fix $y\in\mc{B}$ satisfying
$\ldav y\rdav_{\mc B}<\rho$, and let
$\bm y=(y)_{n\in\mathbb L}\in\ell^{\oo}_{\mathbb L}\lc\mc B\rc$
denote the constant sequence with value $y$.
By \eqref{Eq_H Operator Open}, there exists
$\lc x_n\rc_{n\in\mathbb L}\in\ell^{\oo}_{\mathbb L}\lc\mc B\rc$
such that
\[
x_{n+1}-Tx_n=y
\quad\text{and}\quad
\ldav x_n\rdav_{\mc B}<1,
\qquad n\in\mathbb L.
\]
Define
\[
z_k
=
\frac1k
\sum_{m=1}^{k}
x_m,
\qquad
k\ge1.
\]
Then
$\ldav z_k\rdav_{\mc B}<1$ and $ (I-T)z_k =
\lc x_{1} - x_{k+1} \rc/k  + y $, for all $k \ge 1$. 
Letting
$k\to+\infty$
yields $ y \in \overline{(I-T)(B(\bm0,1))}$. 
This proves Claim
\eqref{Eq_Claim H Surjective implies T-I Surjective}.

\paragraph{}

It remains to prove the converse implication.
Assume that $ \sigma_{sur}\lc T \rc \cap \SP^{1}=\emptyset$, 
and suppose, for a contradiction, that the operator $ H_{T}:\ell^{\oo}_{\mathbb{L}}\lc\mc{B}\rc
\longrightarrow
\ell^{\oo}_{\mathbb{L}}\lc\mc{B}\rc$ 
is not surjective. Then $0\in\sigma_{sur}\lc H_{T}\rc$.

Notice that
\begin{equation}\label{Eq_Lem Sh Spectrum Operator Decomposition}
H_T
=
S-\widehat T,
\end{equation}
where the operators
\[
S,\widehat T:
\ell^\infty_{\mathbb L}(\mc B)
\longrightarrow
\ell^\infty_{\mathbb L}(\mc B)
\]
are defined by
\[
S\bigl((x_n)_{n\in\mathbb L}\bigr)
=
(x_{n+1})_{n\in\mathbb L},
\qquad
\widehat T\bigl((x_n)_{n\in\mathbb L}\bigr)
=
(Tx_n)_{n\in\mathbb L}.
\] 
Moreover,
\begin{equation}\label{Eq_Lem Sh Spectrum T}
\sigma_{sur}(\widehat T)
=
\sigma_{sur}(T).
\end{equation}
Indeed, the inclusion $\sigma_{sur}(T)\subseteq\sigma_{sur}(\widehat T) $ is immediate. Conversely, let $ \lambda\notin\sigma_{sur}(T)$. 
By the Open Mapping Theorem
\cite[Theorem~2.6]{Brezis_Functional_Analysis},
there exists a constant 
\(r_\lambda>0\)
such that $ B(\mathbf0,r_\lambda)
\subseteq
(T-\lambda I)\bigl(B(\mathbf0,1)\bigr)$. 
Applying this relation coordinatewise, it follows that for every
\(
(y_n)_{n\in\mathbb L}
\in
\ell^\infty_{\mathbb L}(\mc B)
\)
there exists
\(
(x_n)_{n\in\mathbb L}
\in
\ell^\infty_{\mathbb L}(\mc B)
\)
such that
\[
(\widehat T-\lambda\widehat I)(x_n)_{n\in\mathbb L}
=
(y_n)_{n\in\mathbb L}  \quad \text{and} \quad \|(x_n)_{n\in\mathbb L}\|_\infty
\le
\frac{1}{r_\lambda}
\|(y_n)_{n\in\mathbb L}\|_\infty ,
\]
where \(\widehat I\) denotes the identity operator on
\(\ell^\infty_{\mathbb L}(\mc B)\). 
Hence,
\(
\widehat T-\lambda\widehat I
\)
is surjective, proving that $ \lambda\notin\sigma_{sur}(\widehat T)$.

Since $ S\widehat{T}
=
\widehat{T}S$, 
their adjoints also commute. Moreover, by
\eqref{Eq_Dual Spectra},
\[
0
\in
\sigma_{sur}\lc H_{T}\rc
=
\sigma_{ap}\lc H_{T}^{*}\rc
\underset{ \eqref{Eq_Lem Sh Spectrum Operator Decomposition} }{=} 
\sigma_{ap}\lc S^{*}-\widehat{T}^{*}\rc.
\]
Hence, Lemma~\ref{Lem_Spectrum of the Difference} yields $ 
\sigma_{ap}\lc S^{*}\rc
\cap
\sigma_{ap}\lc\widehat{T}^{*}\rc
\neq
\emptyset$. 
Since
$$ 
\sigma_{ap}\lc\widehat{T}^{*}\rc
\underset{  \eqref{Eq_Dual Spectra} , \eqref{Eq_Lem Sh Spectrum T} }{=} 
\sigma_{ap}\lc T^{*}\rc
\underset{ \eqref{Eq_Dual Spectra} }{=}
\sigma_{sur}\lc T\rc \quad \text{and} \quad 
\sigma_{ap}\lc S^{*}\rc
\underset{ \eqref{Eq_Dual Spectra} }{=}
\sigma_{sur}\lc S\rc,
$$
one obtains $ \sigma_{sur}\lc S\rc
\cap
\sigma_{sur}\lc T\rc
\neq
\emptyset$. 
Finally, it is readily verified that
$\sigma_{sur}(S)=\SP^{1}$.
Hence,
$\sigma_{sur}(T)\cap\SP^{1}\neq\emptyset$,
contradicting the hypothesis and completing the proof. 
\end{proof}

\section{Proof of Proposition \ref{Prop_Pseudo-Hyperbolicity Spectral Characterization} and Theorem \ref{Theor_Banach Equivalence Pseudo-Hyperbolicity Lipschitz Stability Shadowing}}\label{Sec_Pseudo-Hyperbolicity and Spectrum}

In this section, Proposition 
\ref{Prop_Pseudo-Hyperbolicity Spectral Characterization}
and Theorem 
\ref{Theor_Banach Equivalence Pseudo-Hyperbolicity Lipschitz Stability Shadowing}
are proved, in this order.

\begin{proof}[Proof of Proposition \ref{Prop_Pseudo-Hyperbolicity Spectral Characterization}]

Let $\mc{B}$ be a Banach space and let
$T:\mc{B}\to\mc{B}$
be an invertible bounded linear operator.
The implication from the spectral assumption $ \sigma_r(T)\cap\SP^1=\emptyset $ to the existence of a bounded linear operator
$P:\mc{B}\to\mc{B}$
satisfying the exponential bounds
\eqref{Eq_Pseudo-Hyperbolicic Exponential Bounds}
was established by Pituk
\cite[Proposition 2.5 and p.~10]{Pituk_Spectral Characterization of Shadowing for Linear Operators on Hilbert Spaces}.
Therefore, it remains only to prove the converse implication.

\medskip 

Assume that $T$ is pseudo-hyperbolic and fix
$\lambda\in\SP^{1}$.
To prove that
$\sigma_r(T)\cap\SP^{1}=\emptyset$,
it is enough to construct an operator
$R_{\lambda}:\mc{B}\to\mc{B}$
satisfying
$$ \lc \lambda I-T \rc R_{\lambda}=I. $$

Let
$P:\mc{B}\to\mc{B}$
be the bounded linear operator appearing in the definition of pseudo-hyperbolicity. Then there exist constants
$C>0$
and
$0<\beta<1$ satisfying the exponential bounds \eqref{Eq_Pseudo-Hyperbolicic Exponential Bounds}. Hence, 
the operator
\begin{equation}\label{Eq_Right Inverse of Generalized Hyperbolic}
R_{\lambda}(x)
=
\lambda^{-1}
\sum_{n=0}^{\infty}
\lambda^{-n}T^nP(x)
-
\lambda^{-1}
\sum_{n=1}^{\infty}
\lambda^nT^{-n}(I-P)(x)
\end{equation}
is well defined. Indeed, since $\lambda \in \SP^{1}$, both series converge absolutely by
\eqref{Eq_Pseudo-Hyperbolicic Exponential Bounds}.

A straightforward computation shows that
$(\lambda I-T)R_{\lambda}=I$,
thereby completing the proof. 
\end{proof}

\paragraph{}

For the proof of Theorem \ref{Theor_Banach Equivalence Pseudo-Hyperbolicity Lipschitz Stability Shadowing}, the following standard characterization of surjective operators will be used (see, for instance, \cite[Theorem A.10]{Aiena_Fredholm_Local_Spectral_Theory} and \cite[Theorem 2.12]{Brezis_Functional_Analysis}). If $\mc{B}$ is a Banach space and $T:\mc{B}\to\mc{B}$ is a bounded surjective linear operator, then
\begin{equation}\label{Eq_Right Inverse and Closed Complement}
T \text{ admits a bounded linear right inverse if and only if } \ker T \text{ admits a closed complement.}
\end{equation}

\begin{proof}[Proof of Theorem \ref{Theor_Banach Equivalence Pseudo-Hyperbolicity Lipschitz Stability Shadowing}]

Let $T \in \mc{GL}\lc \mc{B} \rc$ be an invertible operator on the Banach space $\mc{B}$. Assume that $T$ satisfies condition \eqref{Eq_Assumption Closed Complement}. Then, in view of equivalence \eqref{Eq_Right Inverse and Closed Complement}, one infers
that
$$\sigma_{r}\lc T \rc \cap \SP^{1} = \sigma_{sur}\lc T \rc \cap \SP^{1} .$$ 
The equivalence of Points~(1)--(4) now follows directly from
\cite[Corollary~15]{Bernardes_Caraballo_Darji_Favaro_Peris-Generalized_Hyperbolicity_Stability_and_Expansivity_for_Operators_on_Locally_Convex_Spaces},
Theorem~\ref{Theor_Equivalence Shadowing and Spectrum}, Theorem~\ref{Theor_Pseudo-Hyperbolicity Lipschitz Stability Shadowing}, and Proposition~\ref{Prop_Pseudo-Hyperbolicity Spectral Characterization}. 

\medskip

Finally, in the special case where $\mc{B}$ is a Hilbert space, condition \eqref{Eq_Assumption Closed Complement} is automatically satisfied, since every closed subspace of a Hilbert space admits an orthogonal complement.

The proof is complete. 
\end{proof}

\bigskip 

\begin{remark}
Let $\mathcal B$ be a complex Banach space. Denote by $\textrm{GH}
\lc \mathcal B \rc,
\textrm{PH}\lc \mathcal B \rc,
\textrm{S}\lc \mathcal B \rc$, the sets of generalized hyperbolic operators, pseudo-hyperbolic operators, and operators with the shadowing property on $\mathcal B$, respectively. Then, with respect to the operator norm topology on $\mathcal L(\mathcal B)$, these sets are open and satisfy
$$
\textrm{GH}
\lc \mathcal B \rc 
\sub 
\textrm{PH}\lc \mathcal B \rc
\sub
\textrm{S}\lc \mathcal B \rc.
$$

\end{remark}

\section{Proof of Theorem \ref{Theor_Shadowing but not Generalized Hyperbolic}}\label{Sec_Example Shadowing but not Generalized Hyperbolic}

The goal of this section is to construct an invertible operator on $\ell^{p}\lc \N \rc$, $p \in \lc 1, +\oo \rc \backslash \lfp 2 \rfp $, with the shadowing property that is not generalized hyperbolic. In view of Theorem 
\ref{Theor_Equivalence Shadowing and Spectrum}
and Proposition 
\ref{Prop_Pseudo-Hyperbolicity Spectral Characterization},
it suffices to find an invertible operator $T$ on $ \ell^{p}\lc\N\rc$ such that \begin{equation}\label{Eq_Shadowing but not Generalized Hyperbolic}
\sigma_{sur}(T)\cap\SP=\emptyset
\qquad\text{and}\qquad
\sigma_r(T)\cap\SP\neq\emptyset.
\end{equation}
To this end, the following proposition provides a sufficient condition for the existence of an operator $T$ satisfying \eqref{Eq_Shadowing but not Generalized Hyperbolic}.

\begin{proposition}\label{Prop_Spaces with Shadowing but not Generalized Hyperbolicity}
Let $\mc{B}$ be a complex Banach space such that
\begin{equation}\label{Eq_B equal B+B}
\mc{B}\cong \mc{B}\oplus \mc{B}.
\end{equation}
Then there exists $T\in\mc{GL}(\mc B)$ satisfying \eqref{Eq_Shadowing but not Generalized Hyperbolic} if and only if there exists a closed non-complemented subspace $M \sub \mc B$ such that the quotient space 
\begin{equation}\label{Eq_Condition Isomorphic Quotient}
\mc B/M\cong\mc B.
\end{equation}
\end{proposition}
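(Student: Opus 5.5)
The forward implication is immediate. Suppose $T\in\mc{GL}(\mc B)$ satisfies \eqref{Eq_Shadowing but not Generalized Hyperbolic}, and pick $\lambda\in\sigma_r(T)\cap\SP^1$. Since $\sigma_{sur}(T)\cap\SP^1=\emptyset$, the operator $T-\lambda I$ is surjective but has no bounded right inverse. By \eqref{Eq_Right Inverse and Closed Complement}, $M:=\ker(T-\lambda I)$ is then a closed subspace without a closed complement. The induced map $\mc B/M\to\mc B$ is a continuous linear bijection, so the Open Mapping Theorem makes it an isomorphism, which gives \eqref{Eq_Condition Isomorphic Quotient}. The hypothesis \eqref{Eq_B equal B+B} is not needed in this direction.

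For the converse, the approach is to build $T$ as a block operator on $\mc B\oplus\mc B\cong\mc B$. Composing the quotient map $\mc B\to\mc B/M$ with the isomorphism \eqref{Eq_Condition Isomorphic Quotient} gives a surjection $Q:\mc B\to\mc B$ with $\ker Q=M$. By the Open Mapping Theorem there is $r>0$ with $B(\bm0,r)\sub Q(B(\bm0,1))$. The key tool is the openness of the set of surjections together with a quantitative bound: if $A$ is surjective with open-mapping constant $r$ and $\|E\|<r$, then $A+E$ is surjective. Replacing $Q$ by $cQ$ with $cr>2$, the operator $cQ-\lambda I$ is therefore surjective for every $|\lambda|\le 1$, with a uniform constant.

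The main obstacle is invertibility: $cQ$ itself is not injective. I would try to repair this in the $2\times2$ picture. Use \eqref{Eq_B equal B+B} to write $\mc B\cong\mc B\oplus\mc B$, and look for
\[
T=\begin{pmatrix} cQ & E\\ F & G\end{pmatrix}
\]
with $T$ invertible and each $T-\lambda I$ surjective for $\lambda\in\SP^1$. For the latter, $G-\lambda I$ should be invertible, for instance by taking $\|G\|<1/2$ or $\|G^{-1}\|<1/2$. The Schur complement $cQ-\lambda I-E(G-\lambda I)^{-1}F$ must then remain a small perturbation of $cQ-\lambda I$, so that it is surjective. At the same time the off-diagonal blocks must make $T$ injective, which is the delicate point. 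A natural first attempt is to let $F$ restrict to an isomorphism of $M$ onto a subspace and to transport $M$ into the second coordinate via a shift-like splitting coming from $\mc B\cong\mc B\oplus\mc B$, in the spirit of an infinite-multiplicity backward shift. If this fails, I would instead model $T$ on a weighted backward shift on an $\ell^p$-sum of copies of $\mc B$, using $Q$ in the connecting maps.

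Finally, I would verify non-right-invertibility at some $\lambda_0\in\SP^1$. By \eqref{Eq_Right Inverse and Closed Complement}, it suffices to show that $\ker(T-\lambda_0 I)$ contains, or projects onto, a copy of $M$ in a way that a bounded projection onto $\ker(T-\lambda_0I)$ would yield a bounded projection of $\mc B$ onto $M$. That would contradict the non-complementedness of $M$.
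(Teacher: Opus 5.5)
Your forward direction is correct and is the paper's argument. The converse, however, is not proved. You set up the template $T=\begin{pmatrix} cQ & E\\ F & G\end{pmatrix}$ on $\mc B\oplus\mc B$ but never choose $E,F,G$. As a result, the invertibility of $T$, which you yourself flag as the main obstacle, is left open. So is the existence of $\lambda_0\in\SP^1$ with $T-\lambda_0 I$ not right invertible, since that can only be checked for a concrete $T$. Your fallbacks do not close the gap. The ``shift-like splitting'' is never specified. A weighted backward shift on an $\ell^p$-sum of copies of $\mc B$ could only be transported back to $\mc B$ if $\mc B\cong\ell^p(\mc B)$, and that is not what the hypothesis $\mc B\cong\mc B\oplus\mc B$ provides.

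The missing idea is a companion-type block operator, whose invertibility does not depend on $Q$ at all. Within your own template, take $E=F=I$ and $G=0$:
\[
T=\begin{pmatrix} cQ & I\\ I & 0\end{pmatrix},\qquad T^{-1}=\begin{pmatrix} 0 & I\\ I & -cQ\end{pmatrix}.
\]
For $\lambda\in\SP^1$, the $(2,2)$ block of $T-\lambda I$ is $-\lambda I$, which is invertible. The corresponding Schur complement is $cQ+(\lambda^{-1}-\lambda)I$, a perturbation of $cQ$ of norm at most $2$. With your normalization $cr>2$ it is surjective, and hence so is $T-\lambda I$. At $\lambda_0=1$ the Schur complement is $cQ$ itself. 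Directly, $\ker(T-I)=\{(u,u):u\in M\}$, and this has no closed complement in $\mc B\oplus\mc B$: composing $x\mapsto(x,x)$, a bounded projection onto it, and the first-coordinate map would give a bounded projection onto $M$. So $T-I$ has no bounded right inverse. Conjugating by an isomorphism $\mc B\oplus\mc B\cong\mc B$ finishes the proof. Up to swapping the two coordinates, this is exactly the paper's operator $\tilde T(u,v)=(v,u+\alpha Av)$, with the large constant $\alpha$ playing the role of your $c$.
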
 

\begin{remark}
In view of Proposition \ref{Prop_Spaces with Shadowing but not Generalized Hyperbolicity}, it is worth mentioning that, by the classical result of Lindenstrauss and Tzafriri \cite{Lindenstrauss_Tzafriri-On_the_Complemented_Subspaces_Problem}, a Banach space $\mc{B}$ admits a closed non-complemented subspace if and only if it is not isomorphic to a Hilbert space.
\end{remark}

\begin{proof}[Proof of Proposition \ref{Prop_Spaces with Shadowing but not Generalized Hyperbolicity}]
Assume that the Banach space $\mc{B}$ satisfies relation \eqref{Eq_B equal B+B}.

\medskip

Suppose first that there exists $T\in\mc{GL}(\mc B)$ satisfying
\eqref{Eq_Shadowing but not Generalized Hyperbolic}. Let
$\lambda\in\sigma_r(T)\cap\SP^1$ and set $ M:=\ker(T-\lambda I)$. 
By \eqref{Eq_Shadowing but not Generalized Hyperbolic} and equivalence
\eqref{Eq_Right Inverse and Closed Complement},
the subspace $M$ is non-complemented. Relation \eqref{Eq_Condition Isomorphic Quotient}
follows by applying the Isomorphism Theorem \cite[Theorem 1.7.14]{Megginson_Banach_Space_Theory} to the operator $T-\lambda I$, noting that $T-\lambda I$ is surjective and therefore has closed range.

\medskip 

Conversely, suppose that there exists a closed non-complemented subspace $M \sub \mc B$ satisfying relation \eqref{Eq_Condition Isomorphic Quotient}, and denote by $ \mc X=\mc B\oplus\mc B$. 
By relations \eqref{Eq_B equal B+B} and \eqref{Eq_Condition Isomorphic Quotient}, there exist isomorphisms
$U:\mc X\to\mc B$
and
$V:\mc B/M\to\mc B$.
Define the linear operator

\[
A:\mc B\to\mc B, \qquad A=V\circ q,
\]
where
$q:\mc B\to\mc B/M$
is the quotient map sending each $x\in\mc B$ to its equivalence class
$\lb x\rb_M$.
Then $A$ is surjective and
$\ker(A)=M$,
so $\ker(A)$ does not admit a closed complement in $\mc B$.

Define the invertible bounded linear operator
\[
\tilde{T}:\mc X\longrightarrow\mc X,
\qquad
\tilde{T}(u,v)
=
\bigl(v,u+\alpha Av\bigr),
\]
where $ \alpha= 4/\e $, and $\varepsilon>0$ is chosen so that
\[
A+cI
\text{ is surjective for every }
c\in\C
\text{ satisfying }
|c|<\varepsilon.
\]
Such a constant $ \e > 0$ exists because $A$ is surjective and the surjective resolvent set
$\rho_{sur}(A)=\C\setminus\sigma_{sur}(A)$
is open
\cite[Theorem~1.12]{Aiena_Fredholm_Local_Spectral_Theory}.

\paragraph{}
The goal is to prove that relation \eqref{Eq_Shadowing but not Generalized Hyperbolic} is satisfied for
$T=U\tilde{T}U^{-1}$.
Since $U$ is an isomorphism,
\[
\sigma_{sur}(T)=\sigma_{sur}(\tilde{T})
\qquad\text{and}\qquad
\sigma_{r}(T)=\sigma_{r}(\tilde{T}).
\]
Therefore, it suffices to verify relation
\eqref{Eq_Shadowing but not Generalized Hyperbolic}
for $\tilde{T}$.
In particular, it is enough to prove that
\[
1\in\sigma_{r}(\tilde{T})
\qquad\text{and}\qquad
\sigma_{sur}(\tilde{T})\cap\SP^{1}=\varnothing.
\]

Observe that
\[
\ker(\tilde{T}-I)
=
\left\{
(u,u):u\in\ker(A)
\right\}.
\]
Since $\ker(A)$ admits no closed complement in $\mc B$, the subspace
$\ker(\tilde{T}-I)$
admits no closed complement in $\mc X$.
Hence, by \eqref{Eq_Right Inverse and Closed Complement},
the operator $\tilde{T}-I$ admits no bounded linear right inverse.
Therefore,
$1\in\sigma_{r}(\tilde{T})$.

\bigskip 

It remains to prove that $ \sigma_{sur}(\tilde{T})\cap\SP^{1}=\varnothing$. 
Fix $\lambda\in\SP^{1}$.
To show that $\tilde{T}-\lambda I$ is surjective, let
$(r,s)\in\mc X$ be arbitrary.
Notice that
\[
\left|
\frac{\lambda^{-1}-\lambda}{\alpha}
\right|
\le
\frac{2}{\alpha}
<
\varepsilon.
\]
Hence, the operator $ A+  \alpha^{-1}\lc \lambda^{-1}-\lambda \rc I $ is surjective.
Since multiplication by the nonzero scalar $\alpha\lambda$
preserves surjectivity, the operator $\alpha\lambda A+ \lc 1-\lambda^{2} \rc I$ is also surjective.
Therefore, there exists $u\in\mc B$ such that
\[
\left(
\alpha\lambda A+ \lc 1-\lambda^{2} \rc I
\right)u
=
s-\alpha Ar+\lambda r.
\]
Define $ v=r+\lambda u$. A straightforward computation shows that
$ \lc \tilde{T}-\lambda I \rc \lc u,v  \rc = \lc r,s \rc$. 
Hence $\tilde{T}-\lambda I$ is surjective.
Since $\lambda\in\SP^{1}$ was arbitrary, $\sigma_{sur}\big( \tilde{T} \big) \cap\SP^{1}=\varnothing$. 

This proof is complete. 
\end{proof}

\bigskip

\begin{proof}[Proof of Theorem \ref{Theor_Shadowing but not Generalized Hyperbolic}]
Fix $p \in \lc 1,+\oo \rc$ with $p\neq2$, and notice that
$$ \ell^{p}\lc \N \rc\cong\ell^{p}\lc \N \rc\oplus\ell^{p}\lc \N \rc . $$ 
The proof is divided into two cases.

\smallskip 

Suppose first that $2<p<+\oo$. By \cite[p.~273]{Szankowski-Three_Space_Problems_Approximation_Property}, there exists a closed non-complemented subspace 
$M \sub \ell^{p}\lc\N\rc$
such that
$\ell^{p}\lc\N\rc/M\cong\ell^{p}\lc\N\rc$.
Therefore, by Proposition
\ref{Prop_Spaces with Shadowing but not Generalized Hyperbolicity},
there exists an operator
$T\in\mc{GL}\lc\ell^{p}\lc\N\rc\rc$
satisfying
\eqref{Eq_Shadowing but not Generalized Hyperbolic}.
Hence, by Theorem 
\ref{Theor_Equivalence Shadowing and Spectrum}
and Proposition 
\ref{Prop_Pseudo-Hyperbolicity Spectral Characterization},
$T$ has the shadowing property but is not generalized hyperbolic.

\medskip

Suppose now that $1<p<2$, and set $q=p/(p-1)>2$. By
\cite[Theorem~6]{Rosenthal-Subspaces_of_Lp_Independent_Random_Variables},
there exists a closed non-complemented subspace
$N \sub \ell^{q}\lc\mathbb N\rc$
that is isomorphic to
$\ell^{q}\lc\N\rc$.
Let
$F:\ell^{q}(\mathbb N)\to N$
be an isomorphism, let
$\iota_N:N\to\ell^{q}(\mathbb N)$
denote the inclusion map, and define $ J=\iota_NF$.

To conclude the proof, it suffices to show that
$M=\ker\lc J^{*}\rc$
is not complemented.
Since the kernel of a bounded linear operator is closed, Proposition
\ref{Prop_Spaces with Shadowing but not Generalized Hyperbolicity},
together with Theorem 
\ref{Theor_Equivalence Shadowing and Spectrum}
and Proposition 
\ref{Prop_Pseudo-Hyperbolicity Spectral Characterization},
yields the existence of an operator
$T\in\mc{GL}\lc\ell^{p}\lc\N\rc\rc$
that has the shadowing property but is not generalized hyperbolic,
provided that $M$ is not complemented.

To this end, by equivalence
\eqref{Eq_Right Inverse and Closed Complement},
it is enough to show that the operator
$J^{*}:\ell^{p}\lc\N\rc\to\ell^{p}\lc\N\rc$
admits no bounded right inverse.
Assume, for a contradiction, that there exists
$R\in\mc L\lc\ell^{p}\lc\N\rc\rc$
such that
$J^{*}R=I$.
Then, upon identifying $J^{**}$ with $J$, one has
$R^{*}J=I$.
Hence, the operator
$P=JR^{*}$
satisfies $ P^{2}=P $ and $ P\lc\ell^{q}\lc\N\rc\rc = N$. 
Thus $P$ is a projection onto $N$, contradicting the assumption that $N$ is non-complemented. This proves the claim and completes the proof.
 \end{proof}

\begin{remark}
Examples of bounded linear operators (not necessarily invertible) satisfying
\eqref{Eq_Shadowing but not Generalized Hyperbolic}
can be deduced from the work of Burlando
\cite{Burlando-Continuity_of_Spectrum_and_Spectral_Radius_in_Algebras_of_Operators}.
Indeed, 
\cite[Example 1.1]{Burlando-Continuity_of_Spectrum_and_Spectral_Radius_in_Algebras_of_Operators}
provides an operator $T$ satisfying
$$
0\notin\sigma_{sur}(T)
\qquad\text{and}\qquad
0\in\sigma_{r}(T).
$$ 
Since the surjective resolvent set $ \rho_{sur}(T)=\C\setminus\sigma_{sur}(T) $ is open \cite[Theorem 1.12]{Aiena_Fredholm_Local_Spectral_Theory}, there exists $c>0$ such that
$ B(0,c) \sub \rho_{sur}(T)$. It is then readily verified that the operator $ \widetilde T = \frac{4}{c}T+I $ satisfies \eqref{Eq_Shadowing but not Generalized Hyperbolic}.
\end{remark}

\paragraph{Acknowledgments} 
\thanks{Ali Messaoudi was partially supported by CNPq grant 310784/2021-2 and by FAPESP grant 2019/10269-3. 
José Tofanin Neto was supported by CAPES grant 88887.888179/2023-00. 
Manuel Saavedra was supported by  FAPESP grant 2026/08439-1.
Ioannis Tsokanos was supported by FAPESP grant 2024/10135-5. }

\end{document}